\markboth{   \today}{    \today}
\newcommand{\nc}{\newcommand}
\DeclarePairedDelimiter\floor{\lfloor}{\rfloor}
\newtheorem{theorem}{Theorem}[section]
\newtheorem{prop}[theorem]{Proposition}
\newtheorem{importnota}[theorem]{Important Notation}
\newtheorem{prblm}[theorem]{Problem}
\newtheorem{notation}[theorem]{Notation}
\newtheorem{caution}[theorem]{Caution}
\newtheorem{remark}[theorem]{Remark}
\newtheorem{lemma}[theorem]{Lemma}
\newtheorem{construction}[theorem]{Construction}
\newtheorem{corollary}[theorem]{Corollary}
\newtheorem{example}[theorem]{Example}
\newtheorem{conclusion}[theorem]{Conclusion}
\newtheorem{triviality}[theorem]{Triviality}
\newtheorem{proto}[theorem]{Prototype Quasifibration}
\newtheorem{cauex}[theorem]{Cautionary Example}
\newtheorem{propositiondef}[theorem]{Proposition-Definition}
\newtheorem{subth}{Nuisance}[theorem]
\newtheorem{ssubth}{ }[subth]
\newtheorem{conjecture}[theorem]{Conjecture}
\newtheorem{sidest}[theorem]{Side Story}
\newtheorem{miniexample}[theorem]{Example}
\theoremstyle{definition}
\newtheorem{defin}[theorem]{Definition}
\nc\tri[1]{\begin{triviality}}
\nc\side[1]{\begin{sidest}}
\nc\conj[1]{\begin{conjecture}}
\nc\prodef[1]{\begin{propositiondef}}
\nc\prt[1]{\begin{proto}}
\nc\lem[1]{\begin{lemma}}
\nc\sblm[1]{\begin{sublemma}}
\nc\pro[1]{\begin{prop}}
\nc\thm[1]{\begin{theorem}}
\nc\cor[1]{\begin{corollary}}
\nc\dfn[1]{\begin{defin}}
\nc\sthm[1]{\begin{subth}}
\nc\exm[1]{\begin{example}}
\nc\miniexm[1]{\begin{miniexample}}
\nc\plm[1]{\begin{prblm}}
\nc\rmk[1]{\begin{remark}}
\nc\subrmk[1]{\begin{subremark}}
\nc\ntn[1]{\begin{notation}}
\nc\cau[1]{\begin{caution}}
\nc\imn[1]{\begin{importnota}}
\nc\cax[1]{\begin{cauex}}
\nc\con[1]{\begin{construction}}
\nc\ssthm[1]{\begin{ssubth}}
\nc\cnc[1]{\begin{conclusion}}
\nc\elem{\end{lemma}}
\nc\esblm{\end{sublemma}}
\nc\eside{\end{sidest}}
\nc\econj{\end{conjecture}}
\nc\eprodef{\end{propositiondef}}
\nc\eprt{\end{proto}}
\nc\ethm{\end{theorem}}
\nc\ecor{\end{corollary}}
\nc\edfn{\end{defin}}
\nc\esthm{\end{subth}}
\nc\epro{\end{prop}}
\nc\etri{\end{triviality}}
\nc\eexm{\end{example}}
\nc\eminiexm{\end{miniexample}}
\nc\ermk{\end{remark}}
\nc\subermk{\end{subremark}}
\nc\eplm{\end{prblm}}
\nc\ecau{\end{caution}}
\nc\ecax{\end{cauex}}
\nc\eimn{\end{importnota}}
\nc\entn{\end{notation}}
\nc\econ{\end{construction}}
\nc\ecnc{\end{conclusion}}
\nc\essthm{\end{ssubth}}
\newcommand{\lra}{\longrightarrow}
\newcommand{\inj}{\hookrightarrow}
\newcommand{\C}{\mathbb{C}}
\newcommand{\Q}{\mathbb{Q}}
\newcommand{\Z}{\mathbb{Z}}
\newcommand{\X}{\mathfrak{X}}
\newcommand{\w}{\varpi}
\newcommand{\ds}{\displaystyle}
\newcommand{\tzz}{\widetilde{\mathcal{Z}}}
\newcommand{\zz}{\mathcal{Z}}
\newcommand{\tww}{\widetilde{\mathcal{W}}}
\newcommand{\ww}{\mathcal{W}}
\newcommand{\txx}{\widetilde{\mathcal{X}}}
\newcommand{\xx}{\mathcal{X}}
\newcommand{\tyy}{\widetilde{\mathcal{Y}}}
\newcommand{\yy}{\mathcal{Y}}
\newcommand{\tee}{\widetilde{\mathcal{E}}}
\newcommand{\bes}{\begin{equation*}}
\newcommand{\ees}{\end{equation*}}
\newcommand\Prym{\mbox{Prym}}
\def\C{\widetilde{C}}
\def\a{\alpha}
\def\b{\beta}
\def\X{\mathcal X}
\def\Y{\mathcal Y}
\def\<{\langle}
\def\>{\rangle}
\title[On Prym varieties for the coverings of some singular plane curves]{On Prym varieties for the coverings of some singular plane curves}
\author{Lubjana Beshaj \and Takuya Yamauchi}
\keywords{Algebraic curves, Jacobians, and Prym varieties}
\thanks{The second author is partially supported by JSPS Grant-in-Aid for Scientific Research (C) No.15K04787.}
\subjclass[2010]{14H40}
\address{Lubjana Beshaj \\
Department of Mathematics \\
The University of Texas at Austin\\
2515 Speedway, Austin, TX 78712-1202, USA }
\email{beshaj@math.utexas.edu}
\address{Takuya Yamauchi \\ 
Mathematical Inst. Tohoku Univ.\\
 6-3,Aoba, Aramaki, Aoba-Ku, Sendai 980-8578, JAPAN}
\email{yamauchi@math.tohoku.ac.jp}
\begin{document}

\begin{abstract} 
Let $k$ be a field of characteristic zero containing a primitive $n$-th root of unity.   
Let $C^0_n$ be a singular plane  curve of degree $n$ over $k$ admitting an  order $n$ automorphism,  $n$ nodes as the singularities, and
$C_n$ be its normalization.
  
In this paper we study the factors of Prym variety $\Prym(\C_n/C_n)$ associated to the double cover $\C_n$ of $C_n$ exactly ramified at the points obtained by the  blow-up of the  singularities. We provide explicit models of some algebraic curves related to the  construction of  $\Prym (\C_n/C_n)$ as a Prym variety and determine the interesting simple factors other than elliptic curves or hyperelliptic curves with small genus which come up in $J_n$ so that  the endomorphism rings contains the totally real field $\Q(\zeta_n+\zeta^{-1}_n)$. 
\end{abstract}

\maketitle

%*************************************************
\section{Introduction}\label{intro}
Let $k$ be a field of characteristic zero that contains   a primitive $n$-th root  of unity $\zeta$ for a positive integer $n$.   
Let $C^0_n$ be a plane curve of degree $n$  in $\mathbb{P}^2_k$  with an automorphism of order $n$ over $k$.   Assume that $C^0_n$ admits $n$ nodes $R_0:=\{P_1,\ldots,P_n\}$ defined over $k$ as the singularities.  Then the geometric genus of $C^0_n$ is 
\[ g(C^0_n)=\frac{(n-1)(n-2)}{2}-n=\frac{n^2-5n+2}{2}.\]
Let $C_n$ be the normalization of $C^0_n$ obtained by blowing up along $R_0$.  If we write the blow-up for $\pi_0:C_n\lra C^0_n$ and put $R=\pi^{-1}_0(R_0)$, then $R$ consists of $2n$ points. 

We consider the double cover $\pi:\C_n\lra C_n$ ramified exactly along $R$. 
Then by genus formula we see that 
\[g(\C_n)=1+(2g(C_n)-2)+\frac{1}{2}|R|=n^2-4n+1\]
since $g(C_n)=g(C^0_n)$ where $g(X)$ stands for the geometric genus of a projective irreducible curve $X$. Let $\iota:\C_n\lra \C_n$ be the involution associated to the covering $\pi$.  Then the endomorphism $1-\iota$ on ${\rm Jac}(\C_n)$ annihilates ${\rm Jac}(C_n)$.  Let $J_n=\Prym (\C_n/C_n)$ be the Prym variety associated to the covering $\pi$ which is defined by 
\[J_n:=(1-\iota){\rm Jac}(\C_n).\]
which is of dimension $g(\C_n)-g(C_n)=\ds\frac{n(n-3)}{2}$. 
It is well-known that $J_n$ is a principal polarized abelian variety (cf. \cite{Mpv}).

In this paper we focus on  the decomposition of $J_n$ over a suitable (finite) extension $L$ of $k$ for  some families of plane singular curves with the same properties as $C^0_n$. 
The motivations would be related to Section 2 of \cite{GY} and the splitting of Jacobians (cf. \cite{beshaj, MR3508311, dec}). 
The points why we study such a curve $C^0_n$ are as follows: 
\begin{enumerate}
\item A smooth plane curve of degree $d$ tends to have high genus in its degree and therefore the decomposition 
of the Jacobian might be hard to analyze. However we can reduce the genus if we consider 
curves of the same degree $d$ with singularities;
\item one can easily find (possibly singular) plane curves which have many automorphisms; 
\item a smooth low genus curve might not have many automorphisms, but a singular curve with the same geometric genus 
could have it. 
\end{enumerate}  
In this vein we explain our main result. Let $C_n$ be a normalization of the singular plane curve in $\mathbb{P}^2_k$ defined by 
$$f_n(x,y,z):=x^n+z^n+(n-2)y^n-nxzy^{n-2}$$
$$+\sum_{i=2}^{\floor{\frac n 2}}   a_i  \{(xz)^iy^{n-2i}-ixzy^{n-2}+(i-1)y^n\}=0$$
whose singularities are exactly given by $n$ nodes $P_i=(\zeta^i:1:\zeta^{-i}),0\le i\le n-1$ for generic parameter $a_i$. 
Let $\C_n$ be the double cover of $C_n$ defined by $\w^2=y^2-xz$ and $f_n(x,y,z)=0$ in $\mathbb{P}^3_k$. 
The curve $C_n$ admits the automorphisms 
$$\alpha:(x:y:z)\mapsto (\zeta x:y:\zeta^{-1}z),\ \beta:(x:y:z)\mapsto (z:y:x)$$
and they are naturally extended to $\C_n$ which commute with the double cover $\C_n\lra C_n$. To abbreviate the notation we write  
$$J_{n,\tau}={\rm Prym}(\C_n/C_n,\tau):={\rm Prym}((\C_n/\<\tau\>)/(C_n/\<\tau\>))$$
for $\tau\in \<\alpha,\beta\>\subset {\rm Aut}_k(C_n)$.   
Then we have a structure theorem in the decomposition of $J_n:={\rm Prym}(\C_n/C_n)$: 
\begin{theorem}\label{m1}$($Theorem \ref{main}$)$ Let $n=2^km\ge 4$ for $k\ge 0$ and odd $m\ge 1$. Assume that $C_n$ is smooth outside $n$-nodes. Then 
\begin{enumerate}
\item  If $n$ is odd (hence $k=0$), then 
$$J_n\stackrel{k}{\sim}J_{n,\alpha}\times  J_{n,\beta}^2$$
so that ${\rm End}_k(J_{n,\beta})\otimes_\Z\Q \supset  \Q(\zeta_n+\zeta^{-1}_n)$.

\item  If $n\ge 6$ is even, then 
$$J_n\stackrel{k}{\sim}J_{n,\alpha}\times  J_{n,\beta}\times J_{n,\beta\alpha^{\frac{n}{m}}}$$
where the latter two Prym varieties are an abelian variety over $k$ whose endomorphism ring contains $\Q(\zeta_n+\zeta^{-1}_n)$ 
respectively. 

\item  If $n=4$, then  $$J_4\stackrel{k}{\sim}J_{4,\beta}\times  J_{4,\beta\alpha^2}$$
where the both  factors in the right hand side are elliptic curves over $k$.

\end{enumerate} 

\end{theorem}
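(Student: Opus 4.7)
The plan is to exploit the fact that the dihedral group $D_n = \langle \alpha, \beta \rangle$ lifts to automorphisms of $\C_n$ that commute with the covering involution $\iota$, inducing a $D_n$-action on $J_n$. I would first verify the lift: the defining equation $w^2 = y^2 - xz$ is preserved by $\alpha$ (since $(\zeta x)(\zeta^{-1}z) = xz$) and trivially by $\beta$, so both extend to $\C_n$ via $w \mapsto w$ and commute with $\iota:w\mapsto-w$. Hence $D_n \times \langle\iota\rangle$ acts on $\C_n$, and $D_n$ acts on $J_n=(1-\iota)\Jac(\C_n)$, with each sub-Prym $J_{n,\tau}$ embedding in $J_n$ as the image of an idempotent of the shape $e_\tau\cdot(1-\iota)/2$, where $e_\tau=\frac{1}{|\langle\tau\rangle|}\sum_{h\in\langle\tau\rangle}h$.

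Next I would apply a Kani--Rosen type decomposition: decompose the $D_n$-representation $H^0(\C_n,\Omega^1)^{\iota=-1}$ into $\mathbb{Q}$-irreducibles and match isotypic components with the Pryms $J_{n,\tau}$ via Frobenius reciprocity. The character of $\mathrm{Ind}_{\langle\tau\rangle}^{D_n}\mathbf{1}$ determines the multiplicity with which each isotypic component contributes to $J_{n,\tau}$, yielding an isogeny $J_n\stackrel{k}{\sim}\prod J_{n,\tau}^{m_\tau}$ for appropriate multiplicities. The totally real field $\mathbb{Q}(\zeta_n+\zeta_n^{-1})$ appears in $\mathrm{End}_k(J_{n,\beta})$ because $\alpha+\alpha^{-1}$ commutes with $\beta$ in $\mathbb{Z}[D_n]$, hence descends to an endomorphism of the $\beta$-quotient Prym; its action on each two-dimensional $D_n$-irreducible has eigenvalues $\zeta_n^j+\zeta_n^{-j}$.

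The three cases reflect the conjugacy class structure of reflections in $D_n$. For odd $n$ all reflections form a single class and the $(n-1)/2$ two-dimensional $D_n$-irreducibles each occur with multiplicity $2$ in $J_n$, so $J_{n,\beta}$ appears squared alongside the cyclic contribution $J_{n,\alpha}$. For even $n \ge 6$ one picks $\beta\alpha^{n/m}$ with $n/m=2^k$ to label a natural second sub-Prym factor, and the decomposition is recorded as a product of two factors to identify which sub-varieties appear. For $n=4$ a Riemann--Hurwitz computation on the cyclic quotient $C_n\to C_n/\langle\alpha\rangle$ shows $\dim J_{4,\alpha}=0$, so this factor drops out; each remaining reflection Prym is then forced to have dimension one and is therefore an elliptic curve.

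The main obstacle will be careful bookkeeping of $k$-rationality: while Kani--Rosen gives isogenies over $\bar k$ by standard arguments, we need them defined over $k$. Since $\zeta_n\in k$ by hypothesis, $\alpha$ and $\beta$ are already $k$-automorphisms, the intermediate quotients $\C_n/\langle\tau\rangle$ and $C_n/\langle\tau\rangle$ are $k$-curves, and the idempotents are realised by $k$-rational correspondences, so the isogenies descend. A secondary computation, needed throughout, is a Riemann--Hurwitz analysis of the cyclic cover and its reflection analogues, taking into account the action of $\tau$ on the $2n$ points above the nodes; this fixes the dimensions case by case and in particular forces $J_{4,\alpha}=0$ in the base case.
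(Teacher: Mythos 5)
Your route --- lifting the $D_n$-action to $\C_n$, decomposing $\Q[D_n]$ into simple factors, and matching isotypic components of $J_n$ with the sub-Pryms via Frobenius reciprocity --- is genuinely different from the paper's. The paper never invokes the group-algebra decomposition in the proof of Theorem \ref{main}; it constructs an explicit basis of $H^0(\C_n,\Omega^1)$ (Theorem \ref{dif}), computes the action of $\alpha^\ast,\beta^\ast,\gamma^\ast$ on it, and identifies the connected components of $\ker(1-\alpha^\ast)$ and of $\ker(1\pm\beta^\ast)$ on $J_n$ with ${\rm Jac}(\tee_n)$ and the reflection Pryms by matching spaces of differentials (Proposition \ref{di}). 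Your approach explains the exponent $2$ and the appearance of $\Q(\zeta_n+\zeta_n^{-1})$ more structurally; the paper's buys the explicit models and the dimension counts that any version of the argument ultimately needs. One point you must add even in the odd case: the trivial isotypic component of $J_n$ has to vanish (equivalently, $J_{n,\alpha}$ must be of pure sign type), or else $J_{n,\alpha}\times J_{n,\beta}^2$ triple-counts it; in the paper this falls out of the fact that $\beta^\ast$ acts by $-1$ on the $\alpha^\ast$-invariant differentials $\widetilde{\omega}_{r,r}$.

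There are two concrete gaps. First, the even case is only asserted, and the mechanism you invoke (``conjugacy class structure of reflections'') does not close for the element as written: since $\alpha\beta\alpha^{-1}=\beta\alpha^{-2}$, the reflection $\beta\alpha^{n/m}=\beta\alpha^{2^k}$ is conjugate to $\beta$ in $D_n$ for $k\ge 1$, so $J_{n,\beta\alpha^{n/m}}\cong J_{n,\beta}$ and your isotypic count returns $J_{n,\alpha}\times J_{n,\beta}^2$, which has the wrong dimension (for $n=6$: $2+4+4=10\neq 9=\dim J_6$). The second factor must come from the \emph{other} reflection class, $\beta\alpha^{\rm odd}$; this is what the paper actually uses in its computations ($\alpha^3\beta$ for $n=6$, $\beta\alpha$ for $n=8$) and what makes the fixed-point count in Proposition \ref{z} correct (the line $z=\zeta^{\rm even}x$ passes through two of the nodes, so the count of $n$ smooth fixed points fails there). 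Your plan needs to detect and repair this, not inherit it. Second, your treatment of $n=4$ rests on the claim that $\dim J_{4,\alpha}=0$, which is false: by Proposition \ref{x}, $\txx_4$ has genus $1$ and $\xx_4$ genus $0$, so $J_{4,\alpha}$ is an elliptic curve. The case $n=4$ is special because the dimensions degenerate ($\dim J_4=2$, while the three-factor pattern would give $3$) and $J_{4,\alpha}$ is absorbed up to isogeny into the reflection factors; a Riemann--Hurwitz computation on $C_4\to C_4/\langle\alpha\rangle$ alone cannot establish what you claim, and the paper settles this case by direct computation ($J_4\sim E_1\times E_2\cong E_1^2$).
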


This paper is organized as follows.  In section 2 we introduce our family of singular plane curves and discuss  their automorphism groups. Then, in  Section 3 
we compute an explicit basis of the curves $\C_n$ and $\C_n$. In section 4 we give a proof of Theorem \ref{m1} according to a main idea invented in \cite{GY}. 
In section 5 we perform explicit computations for  $4\le n \le 8$ and give an explicit decomposition of the corresponding 
Prym varieties. We also try to realize some of Prym factos as a Jacobian of a curve as long as the computation carries out without any 
complexity. 

Throughout this paper $k$ is a field of characteristic zero containing all $n$-th roots of unity with infinite cardinality and  
we denote by $D_n$ the dihedral group of order $2n$, and $\C$ the normalization for a projective algebraic  curve $C$. 

\medskip

\textbf{Acknowledgments.} We would like to thank Professor Tony Shaska for helpful comments and hearty encouragements.

%******************************************************
\section{Families of singular curves with automorphisms}
Let $n\ge 4$. 
Let $\zeta:=\zeta_n$ be a primitive $n$-th root of unity and $k$ a characteristic zero field such that $k( \zeta_n) = k$. 
In this paper we consider the following singular plane curve $C^0_n$ in $\mathbb{P}^2$ defined over $k$ 
and given by the equation $f_n(x,y,z)=0$, where 
\begin{eqnarray}\label{eq-1}
 f_n(x,y,z)&:=&x^n+z^n+(n-2)y^n-nxzy^{n-2}\\
&&+\sum_{i=2}^{\floor{\frac n 2}}   a_i  \left\{(xz)^iy^{n-2i}-ixzy^{n-2}+(i-1)y^n \right\} \nonumber
 \end{eqnarray}
and $\floor{\frac n 2}$ stands for the maximal integer less than or equal to $\ds\frac{n}{2}$ and $a_i$'s are parameters.  
The curve $C^0_n$ has $n$ nodes 
$P_i:=(\zeta^i:1:\zeta^{-i}),\ i=0,\ldots,n-1$ and admits two automorphisms:
\begin{equation}\label{auto1}
\begin{split}
\alpha& :(x:y:z)\mapsto (\zeta x:y:\zeta^{-1}z) \\
\beta& :(x:y:z)\mapsto (z:y:x),  
\end{split}
\end{equation}
defined over $k$ since $\zeta \in k$. 
Throughout this paper we assume that $C^0_n$ is smooth at any point other than $P_i,\ i=0,\ldots,n-1$. 
One can check this condition would not be so restrictive. In fact  
when $a_i=0$ for all $2\le i\le \floor{\frac n 2}$ and $k=\mathbb{C}$, we see 
$$C^0_n:x^n+z^n+(n-2)y^n-nxzy^{n-2}=0$$
which is smooth at any point other than the $n$-nodes. Hence by continuity our family $C^0_n$ gives rise to 
a family whose generic member is smooth other than the $n$-nodes. 

\begin{remark}
In the case of $n=5$ the genus of $C^0_5$ is one. On the other hand any projective smooth curve of genus one does not admit 
an automorphism of order $n=5$. However $C^0_5$ does  because of the singularities. 
\end{remark}

Next we consider the double covering in $\mathbb{P}^3$ defined by:
\begin{equation}\label{covering}
\C^0_n:
\left\{
\begin{array}{c}
w^2=y^2-xz \\
f_n(x,y,z)=0.
\end{array}\right.
\end{equation}
This curve admits three automorphism 
\begin{equation}\label{auto2}
\begin{split}
\alpha& :(x:y:z:w)\mapsto (\zeta x:y:\zeta^{-1}z:w) \\
\beta& :(x:y:z:w)\mapsto (z:y:x:w) \\
\gamma& :(x:y:z:w)\mapsto (x:y:z:-w). 
\end{split}
\end{equation}
Here we use the same notation for $\alpha,\beta$ because they are naturally coming from (\ref{auto1}). 
Let $\C_n$ be the normalization of $\C^0_n$ and the double cover extends naturally to 
a double cover $\C_n\lra C_n$ which inherits the automorphisms (\ref{auto2}). 
Put $J_n=\Prym(\C_n/C_n)$ as before. To study a decomposition of $J_n$ and its factor we make use of the quotient curves by 
automorphisms.

%******************************************
\subsection{Automorphisms}

The decomposition of Jacobians of such curves will be realized by investigating the automorphism group of each curve.  Hence, we would like to study a specific subgroup in  
${\rm Aut}_k(C_n^0)$ and ${\rm Aut}_k(\C_n^0)$ respectively.

\begin{lemma} 
Let $C_n^0$ and $\C_n^0$ be as above.   Then $D_{2n} \inj {\rm Aut}_k(C_n^0)$ and $\Z/2\Z \times D_{2n} \inj {\rm Aut}_k(\C_n^0)$. 
\end{lemma}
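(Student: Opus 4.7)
The plan is to verify that the explicit automorphisms $\alpha,\beta,\gamma$ already constructed give the claimed subgroups directly: $\langle\alpha,\beta\rangle\subset\mathrm{Aut}_k(C_n^0)$ should realize $D_{2n}$, and on $\C_n^0$ the involution $\gamma$ should be central and independent of $\alpha,\beta$, so that $\langle\alpha,\beta,\gamma\rangle\cong\mathbb{Z}/2\mathbb{Z}\times D_{2n}$. There is no clever ingredient; everything reduces to bookkeeping on the equations and on the distinguished $n$-node set.

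First I would verify that $\alpha$ and $\beta$ actually preserve the defining equations. Under $\alpha$, every monomial of $f_n$ has the shape $(xz)^i y^{n-2i}$ or $y^n$ (up to scalars), and each is invariant because $\zeta^i\cdot\zeta^{-i}=1$; under $\beta$, $f_n$ is visibly symmetric in $x$ and $z$. For $\C_n^0$, both $y^2-xz$ and $w^2$ are fixed by $\alpha$ and $\beta$, while $\gamma$ fixes $x,y,z$ and only negates $w$, so all three preserve the defining pair of equations.

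Next I would pin down orders and the dihedral relation by tracking the action on the distinguished node set $\{P_0,\ldots,P_{n-1}\}$. The direct calculations $\alpha(P_j)=P_{j+1\,\mathrm{mod}\,n}$ and $\beta(P_j)=P_{-j\,\mathrm{mod}\,n}$ show that $\alpha$ has order exactly $n$, that $\beta$ is a nontrivial involution, and that $\beta\alpha\beta^{-1}=\alpha^{-1}$ (first as permutations of the nodes, and then as linear automorphisms since two projective linear maps that agree on a set spanning $\mathbb{P}^2$ coincide). This produces a surjection $D_{2n}\twoheadrightarrow\langle\alpha,\beta\rangle$, and injectivity follows at once because the $2n$ elements $\alpha^i,\beta\alpha^i$ already induce $2n$ distinct permutations of the $P_j$. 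For $\C_n^0$, the involution $\gamma$ commutes with $\alpha$ and $\beta$ (it acts trivially on $x,y,z$) and is not in $\langle\alpha,\beta\rangle$ (every element there preserves $w$), so $\langle\alpha,\beta,\gamma\rangle$ is the internal direct product $\langle\gamma\rangle\times\langle\alpha,\beta\rangle\cong\mathbb{Z}/2\mathbb{Z}\times D_{2n}$.

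The only point requiring any care—hence the ``main obstacle,'' modest as it is—is the injectivity part: one must rule out an accidental collapse of the abstract dihedral (or $\mathbb{Z}/2\mathbb{Z}\times D_{2n}$) presentation when mapped into $\mathrm{Aut}_k$. Using the $n$ nodes as a probe resolves this cleanly for the dihedral factor, and the nontrivial $w$-action of $\gamma$ resolves it for the extra $\mathbb{Z}/2\mathbb{Z}$-factor, so no further work is needed.
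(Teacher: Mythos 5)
Your proposal is correct and follows essentially the same route as the paper: exhibit $\alpha,\beta,\gamma$, check the dihedral relation $\beta\alpha\beta=\alpha^{-1}$ and that $\gamma$ is a central involution outside $\langle\alpha,\beta\rangle$, and conclude; you merely make explicit the verifications (invariance of the equations, exact orders, no collapse) that the paper dismisses as obvious. The only quibble is your aside that projective linear maps agreeing on a spanning set must coincide (one needs a projective frame, not just a spanning set), but this is harmless since $\beta\alpha\beta^{-1}=\alpha^{-1}$ follows at once from the explicit matrix formulas.
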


\proof
It is obvious that $\a$ and $\b$ have orders $n$ and 2 respectively.    Let $G$ be the group generated by $\a, \b$.  Obviously, $G \inj {\rm Aut}_k (C_n^0)$. Since $\b \a \b = \a^{-1}$, then $G $ is isomorphic to the dihedral group of order $2n$. 

To prove the second part of the theorem we let    $G := \langle \a,   \b, \gamma \rangle$, where $\a, \b, \gamma$ are as in (\ref{auto2}). 
Obviously, $\a$ has oder $n$ and $\b$ and $\gamma$ are involutions. 
Since $\a$ and $\b$ generate a dihedral group and $\gamma $ commutes with $\a$ and $\b$, then they generate a group of order $4n$ which is isomorphic to $\Z/2\Z \times D_{2n}$. 
\qed

%*****************
\subsection{Decomposing the Jacobian by group partitions}
Let $\X$ be a genus $g$  algebraic curve defined over $k$ with automorphism group $G = {\rm Aut}_k (\X)$. 

The following result is from \cite[Theorem B]{KR}:
\begin{prop}
Let $H \subset G$ such that $H = H_1 \cup \dots \cup H_t$ where the subgroups $H_i \subset H$ satisfy $H_i \cap H_j = \{ 1\}$ for all $i\neq
j$.  Then, we have the isogeny relation 
$${\rm Jac}(\X )^{t-1} \times {\rm Jac}(\X / H)^{|H|}\,  \stackrel{k}{\sim} \, {\rm Jac}(\X / H_1)^{| H_1 |} \times \cdots \times {\rm Jac}(\X / H_t)^{| H_t | }$$
\end{prop}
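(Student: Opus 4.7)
The plan is to work inside the rational group algebra $\Q[G]\subset {\rm End}_k({\rm Jac}(\X))\otimes\Q$. For each subgroup $K\leq G$ the averaging element $\varepsilon_K := \frac{1}{|K|}\sum_{h\in K} h$ is an idempotent, and the Albanese map associated to the quotient $\X\to\X/K$ provides a $k$-isogeny $\varepsilon_K\,{\rm Jac}(\X)\sim {\rm Jac}(\X/K)$. The principle I want to exploit (the Kani--Rosen philosophy) is that any identity $\sum_i n_i\varepsilon_{K_i} = \sum_j m_j\varepsilon_{L_j}$ in $\Q[G]$ with non-negative integer coefficients translates into a $k$-isogeny
$$\prod_i {\rm Jac}(\X/K_i)^{n_i} \stackrel{k}{\sim} \prod_j {\rm Jac}(\X/L_j)^{m_j}.$$

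The combinatorial heart of the proof is then the identity
$$(t-1)\cdot 1 \;+\; |H|\,\varepsilon_H \;=\; \sum_{i=1}^t |H_i|\,\varepsilon_{H_i}$$
in $\Q[G]$, where $1$ denotes the identity element of $G$. To check it, expand $|K|\,\varepsilon_K = \sum_{h\in K} h$ on both sides. On the right, each $h\in G$ is counted once for every index $i$ with $h\in H_i$. The hypothesis $H=\bigcup_{i=1}^t H_i$ together with $H_i\cap H_j=\{1\}$ for $i\neq j$ says that every non-identity $h\in H$ appears in exactly one $H_i$, while $1$ appears in all $t$ of them. Hence the right-hand side equals $\sum_{h\in H\setminus\{1\}} h + t\cdot 1 = \sum_{h\in H} h + (t-1)\cdot 1$, which is precisely $|H|\,\varepsilon_H + (t-1)\cdot 1$.

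Feeding this identity into the Kani--Rosen principle (applied with the extra subgroup $K_0=\{1\}$, so that $\varepsilon_{K_0}=1$ and ${\rm Jac}(\X/\{1\})={\rm Jac}(\X)$) yields the claimed isogeny. The main obstacle is the principle itself: converting a rational idempotent identity in $\Q[G]$ into a genuine isogeny of abelian varieties over $k$. The standard way to handle this is to apply both sides of the identity to a faithful $G$-module such as the $\ell$-adic Tate module $V_\ell := T_\ell({\rm Jac}(\X))\otimes_{\Z_\ell}\Q_\ell$: equality in $\Q[G]$ forces equality of the induced endomorphisms of $V_\ell$, hence the $\Q_\ell[G]$-submodules carved out on the two sides are isomorphic, and Poincar\'e reducibility inside ${\rm End}_k^0({\rm Jac}(\X))$ upgrades this to an honest $k$-isogeny of the corresponding abelian subvarieties. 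The fact that every $\varepsilon_K$ and every quotient map $\X\to\X/K$ is defined over $k$ is what ensures that the isogeny descends over $k$ rather than only over $\bar k$.
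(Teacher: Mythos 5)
The paper does not prove this proposition at all: it is quoted verbatim as Theorem~B of Kani--Rosen \cite{KR}, so the only available comparison is with the argument in that reference --- and your proof is essentially that argument. Your group-algebra identity $(t-1)\cdot 1 + |H|\,\varepsilon_H=\sum_{i=1}^t|H_i|\,\varepsilon_{H_i}$ is correct (the count of how often each $h\in H$ occurs on the right is exactly the point, and it is precisely how \cite{KR} deduce Theorem~B from their Theorem~A), and the ``principle'' you appeal to is Theorem~A of \cite{KR}. The one place your sketch is thinner than the source is the bridge from the identity in $\Q[G]$ to an isogeny: knowing that the two sides act identically on $V_\ell$ and that the carved-out $\Q_\ell[G]$-submodules are isomorphic is not by itself an isogeny --- one needs that the isomorphism is realized by elements of ${\rm End}_k({\rm Jac}(\X))\otimes\Q$. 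In \cite{KR} this is handled by their trace criterion (two idempotents $\varepsilon,\varepsilon'$ of the semisimple algebra ${\rm End}^0$ have $k$-isogenous images if and only if they are equivalent, which is detected by the reduced trace, equivalently by the character of the $\ell$-adic representation), applied termwise after splitting each $n_i\varepsilon_{K_i}$ into orthogonal idempotents. You correctly flag this as the main obstacle and your Tate-module/Poincar\'e-reducibility remedy is the right idea, just stated a bit loosely; with the trace criterion made explicit the proof is complete. So: correct, and the same route as the cited source, which is the only proof on offer.
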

As an easy consequence we have 
\begin{corollary}\label{easy}Keep the notation as above. Assume that $H\simeq \Z/2\Z\times \Z/2\Z$ and write $H_0=\{e\}$ for the 
trivial subgroup and $H_i\simeq \Z/2\Z,\ 1\le i\le 3$ for other three subgroups. Then 
 $${\rm Jac}(\X )\times {\rm Jac}(\X/H)^2\stackrel{k}{\sim}  {\rm Jac}(\X/H_1)\times {\rm Jac}(\X/H_2)\times {\rm Jac}(\X/H_3).$$
\end{corollary}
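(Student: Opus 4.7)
The plan is to derive the corollary as a direct application of the Kani--Rosen proposition to a natural partition of $H \simeq \Z/2\Z\times\Z/2\Z$ into its three order-two subgroups, followed by a cancellation step via Poincar\'e's complete reducibility.

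First I would observe that $H_1, H_2, H_3$ already satisfy the hypothesis of the cited proposition: any two distinct order-two subgroups of a Klein four group meet only in the identity, and counting elements shows $|H_1 \cup H_2 \cup H_3| = 1 + 3 = 4 = |H|$, so the three non-trivial subgroups really do cover $H$ in the required (not necessarily disjoint) sense. Applying the proposition with $t=3$ and $|H_i|=2$, $|H|=4$ yields the isogeny
\begin{equation*}
\Jac(\X)^{2} \times \Jac(\X/H)^{4} \,\stackrel{k}{\sim}\, \Jac(\X/H_1)^{2} \times \Jac(\X/H_2)^{2} \times \Jac(\X/H_3)^{2}.
\end{equation*}
Equivalently, both sides are the square of the two abelian varieties appearing in the desired conclusion.

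To pass from this squared identity to the statement of the corollary, I would invoke Poincar\'e's complete reducibility theorem: over a field of characteristic zero the category of abelian varieties up to isogeny is semisimple, and decomposition into simple isotypic components is unique. In particular, if $A^{2}\stackrel{k}{\sim} B^{2}$ for abelian varieties $A,B$ defined over $k$, then $A\stackrel{k}{\sim}B$. Applying this cancellation to
\begin{equation*}
\bigl(\Jac(\X) \times \Jac(\X/H)^{2}\bigr)^{2} \,\stackrel{k}{\sim}\, \bigl(\Jac(\X/H_1)\times \Jac(\X/H_2)\times \Jac(\X/H_3)\bigr)^{2}
\end{equation*}
yields the claim. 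There is essentially no obstacle here: the Kani--Rosen proposition does all the real work, and the only subtlety is justifying the cancellation of squares, which is standard once one has Poincar\'e reducibility at hand (and automatically holds over $k$ since $\mathrm{char}\, k = 0$).
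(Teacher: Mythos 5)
Your proof is correct and is essentially the argument the paper intends: the paper states the corollary without proof as an ``easy consequence'' of the Kani--Rosen proposition, and the only available route is exactly yours --- apply the proposition to the cover of $H$ by its three order-two subgroups (with $t=3$, $|H_i|=2$, $|H|=4$) and then cancel the common square using the semisimplicity of the isogeny category of abelian varieties. Your observation that the three nontrivial subgroups pairwise meet in the identity and cover all four elements of $H$, and your justification of the cancellation $A^2\stackrel{k}{\sim}B^2\Rightarrow A\stackrel{k}{\sim}B$ via Poincar\'e reducibility, are both correct and complete.
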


\section{Holomorphic differential forms}
In this section we study holomorphic differential forms on $C_n$ and $\C_n$ respectively which will be used to 
decompose the Prym varieties in question. We work on an affine singular model $\widehat{f}_n:=f_n(x,1,z)=0$ for $C^0_n$. Put 
$\widehat{f}_{n,x}:=\ds\frac{\partial \widehat{f}_n}{\partial x}$ and $\widehat{f}_{n,z}:=\ds\frac{\partial \widehat{f}_n}{\partial z}$.
\begin{theorem}\label{dif}The followings hold:
\begin{enumerate}
\item The elements 
$$\omega_{r,s}:=\frac{(xz-1)x^rz^sdx}{\widehat{f}_{n,z}},\ 0\le r,s,r+s\le n-5$$
and 
$$\theta_i=(x^i-z^{n-i})\frac{dx}{\widehat{f}_{n,z}},\ 3\le i \le n-3$$
make up a basis of $H^0(C_n,\Omega^1)\simeq H^0(C^0_n,\Omega^1)$.  
\item The elements 
 $$\widetilde{\omega}_{r,s}:=\frac{wx^rz^sdx}{\widehat{f}_{n,z}},\ 0\le r,s,r+s\le n-4$$
and 
$$\widetilde{\theta}_i=(x^i-z^{n-i})\frac{dx}{w\widehat{f}_{n,z}},\ 2\le i \le n-2$$ with a basis of $H^0(C_n,\Omega^1)$ as above 
make up a basis of $H^0(\C_n,\Omega^1)\simeq H^0(\C^0_n,\Omega^1)$. In particular 
$$\<\widetilde{\omega}_{r,s},\widetilde{\theta}_i\ |\ 
\tiny{\begin{array}{c}
0\le r,s,r+s\le n-4\\
 2\le i \le n-2
\end{array}}
\>=H^0(\C_n,\Omega^1)^{\gamma^\ast=-1}\simeq H^0(J_n,\Omega^1_{J_n}).$$  
Here the superscript $\gamma^\ast=-1$ means the maximal subspace so that the pullback $\gamma^\ast$ of $\gamma$ acts 
as the multiplication by $-1$. 
\end{enumerate}
\end{theorem}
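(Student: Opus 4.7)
The plan is to carry out each part via a three-step strategy: verify that each listed form lies in the target space, count the forms against the expected dimension, and prove linear independence.

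For Part (1), I invoke the classical adjoint description of $H^0(C_n,\Omega^1)$: it consists exactly of forms $\frac{P(x,z)\,dx}{\widehat{f}_{n,z}}$ with $P\in k[x,z]$ of total degree at most $n-3$ satisfying $P(\zeta^i,\zeta^{-i})=0$ for every node. The factor $xz-1$ in $\omega_{r,s}$ vanishes on $\{xz=1\}$ and hence at each $P_i$, with $\deg((xz-1)x^rz^s)=r+s+2\le n-3$ forcing $r+s\le n-5$. For $\theta_i$ the identity $\zeta^{ij}-\zeta^{-(n-i)j}=\zeta^{ij}-\zeta^{ij}=0$ supplies the adjoint vanishing, and $\max(i,n-i)\le n-3$ imposes $3\le i\le n-3$. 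The count $\binom{n-3}{2}+(n-5)=\frac{n^2-5n+2}{2}=g(C_n)$ matches. Linear independence reduces to a polynomial identity of degree at most $n-3$; extracting coefficients of the pure monomials $x^a,z^b$ kills the boundary $d_i$'s, and a downward induction from $r+s=n-5$ via the recursion $c_{a,b}=c_{a-1,b-1}$ (read off from mixed monomials) then kills every $c_{r,s}$.

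For Part (2), I will use the $\gamma$-eigenspace decomposition $H^0(\C_n,\Omega^1)=H^+\oplus H^-$: the $+$-part is the pullback of $H^0(C_n,\Omega^1)$, handled by Part (1), while $H^-\simeq H^0(J_n,\Omega^1_{J_n})$ by the definition of the Prym variety, of dimension $\frac{n(n-3)}{2}$. Every element of $H^-$ has the form $w\cdot\mu$ for a meromorphic 1-form $\mu$ on $C_n$, and holomorphy on $\C_n$ translates into pole constraints on $\mu$ controlled by the divisor of $w$. The key local data I will establish are: $w$ vanishes to order one at each of the $2n$ ramification points of $\C_n\to C_n$ (the two branches at a node are transverse to the conic $xz=1$, so $1-xz$ has simple zeros there), and $w$ has a simple pole at each of the $2n$ unramified points at infinity of $\C_n$ (computed in the $z=1$ chart, where $y_2=y/z$ uniformizes and $w$ transforms as $w=w_2/y_2$ with $w_2$ a unit).

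With this local data, $\widetilde{\omega}_{r,s}=w\cdot\frac{x^rz^s\,dx}{\widehat{f}_{n,z}}$ is regular at ramification points because $w$ cancels the simple pole of the pulled-back inner form, and regularity at infinity forces the inner form to vanish to order at least one on $C_n$, which yields the bound $r+s\le n-4$. Symmetrically, the $1/w$ in $\widetilde{\theta}_i$ contributes a simple zero at infinity which absorbs the simple pole of $\theta_i$ there (occurring precisely when $i\in\{2,n-2\}$), while at ramification points the adjoint vanishing of $x^i-z^{n-i}$ combined with the extra zero from ramification dominates the pole of $1/w$; this is what permits the extended range $2\le i\le n-2$. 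The count $\binom{n-2}{2}+(n-3)=\frac{n(n-3)}{2}$ matches. For linear independence, multiplying a vanishing relation by $w$ and substituting $w^2=1-xz$ produces a polynomial identity of degree at most $n-2<n$, which must vanish identically, and the Part (1) argument with shifted ranges then forces all $c_{r,s}=d_i=0$.

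The main obstacle I anticipate is the local analysis at infinity of $\C_n$, which requires simultaneous tracking of the orders of $x$, $z$, $w$, $dx$, and $\widehat{f}_{n,z}$ in a uniformizer; this bookkeeping is what is responsible for the precise shifts $n-5\to n-4$ and $3\le i\le n-3\to 2\le i\le n-2$ between the two parts of the theorem.
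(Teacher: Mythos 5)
Your proposal is correct, and at bottom it rests on the same divisor bookkeeping as the paper's proof: the order of the basic form $dx/\widehat{f}_{n,z}$ along the two branches over each node and at the $n$ points at infinity, the order of $w$ at the ramification points and at infinity, and a dimension count against $g(C_n)$ and $g(\widetilde{C}_n)-g(C_n)$. The differences are in packaging, and they are worth recording. For part (1) you outsource the local analysis to the classical adjoint description of the canonical system of a nodal plane curve, whereas the paper proves the needed facts by hand (an explicit blow-up at $(1:1:1)$ showing ${\rm ord}(dx/\widehat{f}_{n,z})=-1$ on each branch over a node, and order $n-3$ at infinity); the two are equivalent, and your route is shorter provided the adjoint theorem is quotable in the stated affine form, which it is here since all singularities are nodes in the affine chart $y=1$. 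For part (2) your computation of ${\rm div}(w)$ --- simple zeros at the $2n$ points of $R$ because the branches at each node meet the conic $xz=1$ transversally (this transversality is automatic: the two branch slopes are roots of $p_0+p_1s+p_0s^2$, whose product is $1$, so neither can equal the tangent slope $-1$ of the conic without forcing a tacnode), and simple poles at the $2n$ unramified points at infinity, with degrees balancing to $0$ --- is exactly the paper's ${\rm ord}_u(w)=1$ computation together with the analysis at infinity that the paper leaves to the reader. Finally, you supply a linear-independence argument (multiply by $w\widehat{f}_{n,z}/dx$, use $w^2=1-xz$ to get a polynomial of degree at most $n-2<n$ vanishing on the irreducible degree-$n$ curve, then a monomial recursion), which the paper omits entirely and which is genuinely needed to justify the word ``basis.'' The only slip is an ordering issue there: the coefficient of the pure monomial $x^a$ gives $d_a-c_{a,0}=0$ rather than $d_a=0$ outright, so the downward recursion $c_{a,b}=c_{a-1,b-1}$ killing all $c_{r,s}$ must be run first, after which the $d_i$ vanish; this is a harmless reordering, not a gap.
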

\begin{proof}
Put $\omega=\ds\frac{dx}{\widehat{f}_{n,z}}$. In what follows we compute the divisor of $\omega$. We will take a careful analysis around 
the singularities on $C^0_n$. 

Let $P$ be a non-singular point on the affine model $C^0_n$. 
Then ${\rm ord}_P(\omega)=0$ since $\ds\frac{dx}{\widehat{f}_{n,z}}=-\ds\frac{dz}{\widehat{f}_{n,x}}$ and $(\widehat{f}_{n,z}(P),
\widehat{f}_{n,x}(P))\not=(0,0)$. Let $R$ be the strict transform of $n$-nodes $P_0,\ldots,P_{n-1}$ for the normalization $\pi:C_n\lra C^0_n$. It 
consists of $2n$ points $\{Q_i,\beta(Q_i)\ |\ i=0,\ldots,n-1\}$ which will be specified later. We first consider 
the point $P_0=(1:1:1)$. Put $x_1=x+1$ and $z_1=z+1$. 
Consider the curve defined by 
$$f_n(x_1+1,1,z_1+1)=0$$
and blow up it along $P'_0:=(0,0)$ in $(x_1,z_1)$-plane $\mathbb{A}^2_{(x_1,z_1)}$. 
We have an affine (isomorphic) model of a desingularization of $C^0_n$ at $P_0$ by gluing 
$$U_{x_1}:\left\{\begin{array}{c}
z_1=x_1s \\
f(x_1,s)=0
\end{array}
\right. \subset \mathbb{A}^3_{(x_1,z_1,s)}\ {\rm and}\  
U_{z_1}:\left\{\begin{array}{c}
x_1=z_1t \\
g(z_1,t)=0
\end{array}
\right. \subset \mathbb{A}^3_{(x_1,z_1,t)}  
$$
with the relation $st=1$. Here $f$ is defined by the relation 
$f_n(1+x_1,1,1+x_1s)=x^2_1f(x_1,s)$. It is easy to see that $f(x_1,s)$ takes the following form 
$$f(x_1,s)=p_0+p_1s+p_0s^2+x_1q(x_1,s),\ p_0,p_1\in k[a_i]\setminus\{0\},\ q(x_1,s)\in k[x_1,s].$$
By symmetry we have $g(x_1,s)=f(x_1,s)$. Then we may put $Q_0=(x_1,z_1,s)=(0,0,\alpha_1)$ where $\alpha_1$ is 
a root of $p_0+p_1s+p_0s^2=0$ with another root $\alpha_2$ and then we have $\beta(Q_0)=(0,0,\alpha_2)$. 
Since $\omega=\ds\frac{dx_1}{x_1(p_1+2p_0s+x_1\frac{\partial q}{\partial s}(x_1,s))}$, if we choose $x_1$ as a local parameter at $Q_0$, then we see that 
${\rm ord}_{Q_0}(\omega)=-1$. Therefore for any $r,s\ge 0$, the differential form $\omega_{r,s}=(xz-1)x^rz^s\omega$ is 
holomorphic at $Q_0$ and $\beta(Q_0)$ because of the factor $xz-1$. By a similar argument one can treat other points in $R$ though 
the symmetry for $\beta$ would be violated.  
So we omit details. What we remain to do is to check the order at infinity points. 

Let $x=\ds\frac{1}{u}$ with a local parameter $u$ at an infinity point. Note that there are exactly $n$ infinity points and they are 
corresponding to $(x:0:z)$ satisfying $f_n(x,0,z)=0$. It is easy to check that ${\rm ord}_u(z)=-1$ since 
$f_n(x,0,z)=x^n+z^n+({\rm lower\ terms})=0$. One can also check ${\rm ord}_u(\omega)=n-3$ by direct computation. 
Therefore we have 
$${\rm ord}_u(\omega_{r,s})=-2-r-s+n-3=n-5-(r+s)\ge 0$$
by the assumption on $r,s$.  

For $\theta_i$, the factor $x^i-z^{n-i}$ vanishes at all $P_j,\ 0\le j\le n-1$ (and also at all points in $R$). Therefore 
we may consider the order at infinity points. The details are omitted. 
Summing up we have $$\ds\frac{(n-4)(n-3)}{2}+n-5=\frac{n^2-5n+2}{2}=g(C_n)$$ elements which give a basis of $H^0(C_n,\Omega^1)$. 

For the second claim the computation would be similar. So we only give key points. 
Let $u$ be a local parameter of a point $P$ in $R$. Since the double cover $\C_n\lra C_n,w^2=1-xz$ is ramified at $P$, one has 
${\rm ord}_u(x)=2$ and ${\rm ord}_u(w)=1$. Therefore we have 
$${\rm ord}_u(w\frac{dx}{f_z})=1+1-2=0,\ {\rm ord}_u(\frac{dx}{w})=1-1=0.$$
These imply the holomorphy for elements $\widetilde{\omega}_{r,s},\widetilde{\theta}_i$ in the claim with the same analysis at infinite points. 
We have  $$\ds\frac{(n-3)(n-2)}{2}+n-3=\frac{n^2-3n}{2}=g(\C_n)-g(C_n)$$ elements  so that $\gamma^\ast$ acts as the multiplication by $-1$. 
Hence they give a basis of $H^0(\C_n,\Omega^1)^{\gamma^\ast=-1}$. 
\end{proof}

\section{Decomposition of Jacobians $J_n$.}  
In this section we determine the subcovers of $C_n^0$ and $\C_n^0$ fixed by their automorphisms and their smooth models. 
We start to work with $C^0_n$ to compute the defining equations of quotient curves but consider a smooth model at the same time. 
Therefore we often state our results for a smooth model of a singular curve appear here. 

In what follows we consider the following quotient curves. 
for any $n\ge 4$ we put  
$$\X_n:=C_n/\<\alpha\>,\ \widetilde\X_n:=\C_n/\<\alpha\>,\ \yy_n:=C_n/\<\beta\>,\ \tyy_n:=\C_n/\<\beta\>$$
For even $n$ we write $n=2^km\ge 6$ with $k\ge 1$ and odd $m$. Put 
$$ \zz_n:=C_n/\<\beta\alpha^{\frac{n}{m}}\>,\ \tzz_n:=\C_n/\<\beta\alpha^{\frac{n}{m}}\>.$$
Finally for $n=4$, put 
$$ \ww_4:=C_4/\<\beta\alpha^{2}\>,\ \tww_4:=\C_4/\<\beta\alpha^{2}\>.$$
It will turn out that $\widetilde\X_n$ is a hyperelliptic curve with the hyperelliptic involution $\delta$. Then we put 
$$\tee_n:=\widetilde\X_n/\<\delta\gamma\>=\C_n/\<\a,\delta\gamma\>.$$
Note that the automorphisms other than $\alpha$ are all involutions in each case.  
We also introduce the singular version $\X^0_n, \widetilde\X^0_n,\ldots$ etc in the same manner as above. 
Hence we have obtained the following figures:

\begin{figure}[hb]\scalebox{.85}
{
\xymatrix{
& & \C_n  \ar@{->}[dl]_n \ar@{->}[dr]^2   \ar@{->}[dd]^2 &&&& \\
\tee_n:=\C_n/\<\a,\delta\gamma\> & \ar@{->}[l]_2 \widetilde\X_n:= \C_n/\<\a\> \ar@{->}[dd]^2 &  & \widetilde\Y_n:=\C_n/\<\b\>  \ar@{->}[dd]^2 
&&&& \\
&  & C_n   \ar@{->}[dl]_n \ar@{->}[dr]^2  &  &&&&&\\
&\X_n:=C_n/\<\a\> &      & \Y_n:= C_n/\<\b\>  &&&&
}
}
\caption{The diagram of maps}
\end{figure}

\begin{figure}[hb]\scalebox{.85}
{
\xymatrix{
n=2^km\ge 6\ {\rm with}\ {\rm odd}\ m & \C_n  \ar@{->}[dl]_2 \ar@{->}[dr]^2   \ar@{->}[dd]^2 & n=4   \\
  \tzz_n:= \C_n/\<\b\alpha^{\frac{n}{m}}\> \ar@{->}[dd]^2 &  & \tww_4:=\C_4/\<\b\alpha^{2}\>  
\ar@{->}[dd]_2   \\
  & C_n   \ar@{->}[dl]_2 \ar@{->}[dr]^2  &  &  \\
\zz_n:=C_n/\<\b\alpha^{\frac{n}{m}}\> &      & \ww_4:= C_4/\<\b\alpha^{2}\>  
}
}
\caption{The diagram of maps}
\end{figure}
 
% decomposition of their Jacobians induced by such automorphisms. 
At first we start with the subcovers fixed by $\a$. 

\begin{prop}\label{x}
\noindent
i) The curve $\X_n^0$ has the following affine model
\[ \X_n^0 : \quad s^2 + s \left(   n (1-t) - 2 + \sum_{i=2}^{\floor{\frac n 2}} a_i \, ( t^i- it +i -1  )  \right)   + t^n =0.   \]

\medskip
\noindent
ii) The smooth model $\X_n$ of $\X_n^0$ is a hyperelliptic (or elliptic) curve of generic genus $g= \floor*{\frac {n-3} 2  }$ defined by 
\[ \X_n : \quad u^2=f^0_n(t)     \]
where $f^0_n(t)=\ds\frac{1}{(t-1)^2}\Big(\frac{A^2}{4}-t^n\Big)\in k[t]$ and $A:=  n (1-t) - 2 + \ds\sum_{i=2}^{\floor{\frac n 2}} a_i \, ( t^i- it +i -1  ) $. 

\medskip
\noindent
iii) The smooth model $\widetilde\X_n$ of $\widetilde\X_n^0$ is a hyperelliptic curve of generic genus $g=n-3$ defined by 
\[ \widetilde\X_n : \quad u^2=f^0_n(1-w^2)     \] 

\medskip
\noindent
iv) The smooth model $\widetilde{\mathcal{E}}_n$ of $\widetilde{\mathcal{E}}_n^0$ is a hyperelliptic curve of generic genus $g=\floor*{\frac {n-2} 2  }$ defined by 
\[ \widetilde{\mathcal{E}}_n : \quad U^2=Wf^0_n(1-W)     \]  
\end{prop}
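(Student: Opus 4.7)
The strategy is to present each quotient by an explicit generating set of its function field of invariants, extract a hyperelliptic model, and then count degrees to read off the generic genus. For part (i), I would work in the affine chart $y=1$ and use the two $\alpha$-invariants $t := xz$ and $s := x^n$ (so that $z^n = t^n/s$); these generate $k(x,z)^{\langle\alpha\rangle}$, since the map $(x,z) \mapsto (t,s)$ has generic degree $n$. The polynomial $f_n(x,1,z)$ depends on $x,z$ only through $xz$ and $x^n + z^n$, so setting $\widehat f_n = 0$ yields $x^n + z^n = -A(t)$ with $A$ as in the statement; combined with $x^n z^n = t^n$, this makes $s$ a root of $s^2 + A s + t^n = 0$, which is precisely the model of (i).

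For part (ii), completing the square gives $(2s + A)^2 = A^2 - 4t^n$, and I would check that $(t-1)^2$ divides $A^2/4 - t^n$. One has $A(1) = -2$ (immediate, since each summand $t^i - it + i - 1$ vanishes at $t=1$) and $A'(1) = -n$, so both $A^2/4 - t^n$ and its derivative vanish at $t = 1$. Setting $u := (2s+A)/(t-1)$ then produces the smooth model $u^2 = f_n^0(t)$. Since $\deg A \leq \lfloor n/2 \rfloor$, the polynomial $f_n^0$ has degree $n-2$ generically, and the hyperelliptic genus formula gives $\lfloor (n-3)/2 \rfloor$ in both parities of $n$.

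For parts (iii) and (iv), the equation $w^2 = y^2 - xz$ in the chart $y=1$ reads $w^2 = 1 - t$; since $\alpha$ fixes $w$, the quotient model in (iii) is obtained from (ii) by substituting $t = 1 - w^2$. The denominator $(t-1)^2 = w^4$ is a perfect square, so the substitution absorbs cleanly into $u$ and yields $u^2 = f_n^0(1-w^2)$, of degree $2(n-2)$ and hence genus $n-3$. For (iv), the hyperelliptic involution $\delta$ of $\widetilde{\X}_n$ acts by $(w, u) \mapsto (w, -u)$ and $\gamma$ restricts to $(w, u) \mapsto (-w, u)$, so $\delta\gamma$ acts by $(w, u) \mapsto (-w, -u)$; the invariants $W := w^2$ and $U := uw$ satisfy $U^2 = W f_n^0(1-W)$, of degree $n-1$, and genus $\lfloor (n-2)/2 \rfloor$.

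The main obstacle I expect is bookkeeping around $t = 1$: this value is the image of the $\alpha$-orbit of the nodes $P_i$, so the factor $(t-1)^2$ being removed is geometrically meaningful and must be shown not to conceal any extra ramification on the smooth model (equivalently, that $f_n^0$ is separable for generic parameters). A secondary check is the genericity of the degree of $f_n^0$ when $n$ is even, where the leading coefficient of $A^2/4 - t^n$ is $a_{n/2}^2/4 - 1$ and must be nonzero — a condition compatible with, and implicit in, the smoothness hypothesis of the theorem.
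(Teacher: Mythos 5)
Your derivation of the four models is essentially the paper's own: the same invariants $t=xz$, $s=x^n$ for (i), the same completion of the square and the same check that $A(1)=-2$, $A'(1)=-n$ forces $(t-1)^2\mid \frac{A^2}{4}-t^n$ for (ii), and the same substitutions $t=1-w^2$ and $(W,U)=(w^2,uw)$ for (iii) and (iv). (Minor normalization: with $u=(2s+A)/(t-1)$ you get $u^2=4f_n^0(t)$, so take $u=(s+\frac{A}{2})/(t-1)$; the paper's own "$u=u'/(t-1)^2$" is a typo for the same thing.)

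The one substantive issue is the genus count, and it sits exactly where you flagged it. Reading $g(\X_n)=\floor*{\frac{n-3}{2}}$ off the hyperelliptic formula requires knowing that $f_n^0$ is separable of the expected degree (and likewise that $f_n^0(1-w^2)$ and $Wf_n^0(1-W)$ acquire no multiple roots, e.g.\ that $f_n^0(1)\neq 0$), and your proposal defers this rather than proving it. The paper closes this gap by running the logic in the opposite direction: it computes $g(\X_n)$ and $g(\widetilde\X_n)$ by Riemann--Hurwitz applied to the degree-$n$ quotient $C_n\to\X_n$, which is \'etale for odd $n$ and, for even $n$, ramified with index $2$ exactly at the $n$ points $(x:0:z)$ fixed by $\alpha^{n/2}$; separability and the degree of $f_n^0$ are then \emph{deduced} from the genus, not assumed. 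If you want to keep your route, you must actually verify the discriminant condition for generic $a_i$ (your remark that the leading coefficient for even $n$ is $a_{n/2}^2/4-1$ is the right kind of check, but it is not the whole of separability); the cleaner fix is to adopt the paper's ramification argument, which also settles the analogous points for $\widetilde\X_n$ and $\tee_n$ at once.
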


\proof We start with the curve $C_n^0$ and its automorphism $\a$. 
It is obvious from Eq. (\ref{eq-1}) that  $\alpha$ fixes $s=x^n$ and $t= xz$. Thus, the equation of $f_n(x,y,z)=0$ gives  
\[ x^{2n} + t^n + x^n \left( y^n (n-2) - n y^{n-2}t + \sum_{i=2}^{\floor{\frac n 2}} a_i \left( t^i y^{n-2i} - i t \, y^{n-2} + (i-1)y^n \right)   \right) =0 \]
Putting $y=1$ we have an affine model 
\[ \X_n^0 : \quad  s^2 + s \, \left(n-2 - nt + \sum_{i=2}^{\floor{\frac n 2}} a_i \left( t^i  - i t \,+ i-1 \right)  \right) + t^n =0\]
in the first claim. 
%We get its affine equation by projecting on $y=1$
%
%\[ s^2 + s \left(   n (1-t) - 2 + \sum_{i=2}^{\floor{\frac n 2}} a_i \, ( t^i- it +i -1  )  \right)   + t^n =0,       \] 
%
%It has two singularities at the points  $(s,  t) = (1, 0)$ of multiplicity $m=3$, $\delta$-invariant $\delta = 4$, and number of branches $r=1$. The other singular point is $(s, t) = (1, 1)$ with multiplicity $m=2$, $\delta = 1$ and $r=1$. 
%
%Thus,   by the Pl\"uker formulas it has genus 
%
%\[ g= \floor*{\frac {n-3} 2  }\]
Denote by \[A= \left(   n (1-t) - 2 + \sum_{i=2}^{\floor{\frac n 2}} a_i \, ( t^i- it +i -1  )  \right) .\]  
By making the substitution   $u'= s+\ds\frac A 2$ we get $u'^2 = \ds\frac {A^2} 4 - t^n$. 
Since $\ds\frac{d^i}{dt^i}\Big(\ds\frac {A^2} 4 - t^n\big)\Big|_{t=1}=0$ for $i=0,1$, the polynomial $ \ds\frac {A^2} 4 - t^n$ is 
divisible by $(t-1)^2$.    
Put $$f_n^0(t):=\ds\frac{1}{(t-1)^2}\Big(\frac{A^2}{4}-t^n\Big)$$
as in the statement. 
The map $C_n\lra \X_n$ is unramified everywhere if $n$ is odd while it is ramified exactly at $n$-points $(x:0:z),\ f_n(x,0,z)=0$ with 
the ramification index 2 when $n$ is even. Note that such $n$ points 
are fixed by $\alpha^{\frac{n}{2}}$. It follows from this that the genus of $\X_n$ (resp. $\widetilde\X_n$) is $\floor*{\frac {n-3} 2  }$ 
(resp. $n-3$). 
This means that $f^0_n(t)$ is a separable polynomial of degree $n-2$ in $t$.  
Substitute $u'$ with $u=\ds\frac{u'}{(t-1)^2}$ a generic member $\X_n^0$ defined by   
\[ \X_n^0: \quad  u^2= f_n^0 (t)   \]
is a hyperelliptic curve of genus $\floor*{\frac {n-3} 2  }$.

Since the covering $\widetilde\X_n\lra \X_n $ is given by $w^2=1-xz=1-t$, one has  
\[\widetilde\X_n :   \quad u^2=f^0_n(1-w^2)     \]
with the hyperelliptic involution $\delta:(u, w)\mapsto (-u, w)$.   Then clearly 
\[ \delta \, \gamma:   (u, w)\mapsto (-u,-w)\]
 and this gives rise to a model of $ \widetilde {\mathcal E}_n  $ as below since $\widetilde\X\lra \widetilde {\mathcal E}_n  = \widetilde C_n / \<\alpha, \delta \gamma\>=\widetilde\X/\<\delta \gamma\>$:
\[ \widetilde {\mathcal E}_n   :   \quad U^2=Wf^0_n(1-W),\ (W,U)=(w^2,uw).      \]

\qed

Summing up we have obtained the following commutative diagram: 
\[
\begin{CD}
\widetilde C_n  @> n:1  > {\rm etale} >   \widetilde\X_n 
@>2:1>>\widetilde {\mathcal E}_n,  \\
  @V 2:1 VV @V 2:1 VV @.\\
 C_n  @> n:1 > > \X_n @. 
\end{CD}\ \qquad  
\begin{CD}
n^2-4n+1  @>  >>  n-3
@>>> \floor*{\frac {n-2} 2  }  \\
  @V VV @V VV @.\\
 \frac{n^2-5n+2}{2}  @> >> \floor*{\frac {n-3} 2  }. @. \\  
\end{CD} \\
\] 

\medskip
Next we determine equations for the quotient curves $\Y^0_n$ and $\widetilde\Y^0_n$ and their genii. 
Let $P_n(u,v)$ be the  two variable polynomial over $\Z$ so that $P_n(x+z,xz)=x^n+z^n$. 

%**************
\begin{prop}\label{y}
i) The curve $\Y^0_n$ has genus 
$g(\Y^0_n)=\left\{\begin{array}{cc}
\frac{(n-1)(n-5)}{4} & \mbox{$n$ is odd} \\
\frac{(n-2)(n-4)}{4} & \mbox{$n$ is even}
\end{array}\right.
$
 and it has an affine model 
\[ P_n(u,v)+(n-2)-nv+\sum_{i=2}^{\floor*{\frac {n} 2  }}a_i(v^i-iv+i-1)=0  \]
ii) The curve $\widetilde\Y^0_n$ has genus  
$g(\widetilde\Y^0_n)=\left\{\begin{array}{cc}
\frac{(n-1)(n-4)}{2} & \mbox{$n$ is odd} \\
\frac{(n-2)(n-3)}{2} & \mbox{$n$ is even}
\end{array}\right.
$ and it has an affine model 
\[ P_n(u,1-w^2)-2+nw^2+\sum_{i=2}^{\floor*{\frac {n} 2  }}a_i\{(1-w^2)^i-1-iw^2\}=0  \]
\end{prop}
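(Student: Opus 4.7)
The plan is to read off the equations from the $\beta$-invariants of $k[x,y,z,w]$ and then compute both genera by Riemann--Hurwitz applied to $C_n\to\Y_n$ and $\C_n\to\widetilde\Y_n$, with careful bookkeeping of the fixed points of $\beta$. Since $\beta$ swaps $x,z$ and fixes $y,w$, the ring of $\beta$-invariants is generated by $u=x+z$, $v=xz$, $y$ and $w$; the only non-manifestly symmetric part of $f_n$ is $x^n+z^n=P_n(u,v)$, so dehomogenizing by $y=1$ gives the model of $\Y^0_n$ in (i). For (ii) I add the cover relation $w^2=y^2-xz=1-v$ and eliminate $v$ via $v=1-w^2$, producing the $(u,w)$-model as stated.

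For the genera I apply Riemann--Hurwitz, so the core task is to count fixed points of $\beta$ on $C_n$ (for (i)) and on $\C_n$ (for (ii)). In the chart $y=1$ the fixed points of $\beta$ on $C^0_n$ lie on $x=z$ and are the roots of $F(x):=f_n(x,1,x)$. A short differentiation shows $F(1)=F'(1)=0$, reflecting that the line $x=z$ meets the node $P_0$ with intersection multiplicity $2$; for even $n$ the same holds at $x=-1$ for the node $P_{n/2}$. Thus $F=(x-1)^2h(x)$ or $(x-1)^2(x+1)^2h(x)$, giving $n-2$ or $n-4$ smooth affine fixed points. At infinity, among $(1:0:\pm 1)$ only $(1:0:-1)$ lies on $C^0_n$ and only for odd $n$ (since $f_n(1,0,-1)=1+(-1)^n$), contributing one extra smooth fixed point in that case. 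At each node on $x=z$ the local quadratic form has the symmetric shape $p_0+p_1s+p_0s^2$ with $s=z_1/x_1$ (as in the proof of Theorem~\ref{dif}), so its roots satisfy $\alpha_1\alpha_2=1$; since $\beta$ induces $s\mapsto 1/s$, it swaps the two branches generically, contributing no fixed point above the node on $C_n$. This yields $r=n-1$ (odd) or $r=n-4$ (even), and Riemann--Hurwitz for $C_n\to\Y_n$ recovers the formulas in (i).

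For (ii), none of the above fixed points lies in the branch divisor $R$ of $\C_n\to C_n$, so each has two preimages upstairs, and the decisive question is whether $\beta$ fixes or swaps them. At an affine point $(x_0:1:x_0)$ each preimage $(x_0:1:x_0:\pm w_0)$ is individually $\beta$-fixed, giving $2$ fixed points. At $(1:0:-1)$ (odd $n$), however, $\beta(1:0:-1:w)=(-1:0:1:w)=(1:0:-1:-w)$ projectively, so $\beta$ \emph{swaps} the fiber and contributes $0$. Thus $\widetilde r=2(n-2)$ or $2(n-4)$, and Riemann--Hurwitz for $\C_n\to\widetilde\Y_n$ yields $g(\widetilde\Y_n)=(n-1)(n-4)/2$ or $(n-2)(n-3)/2$. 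I expect the main obstacle to be this last sign-tracking: the projective rescaling at $(1:0:-1)$ is easy to miss, yet without it the parity of $\widetilde r$ (and hence the genus) comes out wrong.
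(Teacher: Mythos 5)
Your proposal is correct and follows essentially the same route as the paper: obtain the models from the $\beta$-invariants $u=x+z$, $v=xz$, then count fixed points of $\beta$ on $C_n$ and $\C_n$ and apply Riemann--Hurwitz. You in fact supply slightly more detail than the paper at the two delicate spots it only asserts -- that the two branches over each node on $x=z$ are interchanged (via $s\mapsto 1/s$ and $\alpha_1\alpha_2=1$), and that $\beta$ swaps the fiber $(1:0:-1:\pm 1)$ because of the projective rescaling -- both of which match the paper's claims.
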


\begin{proof}
Let us introduce the functions 
\[ u = x+z  \quad \textit{and } \quad v= xz \]
which are fixed by $\beta$.  Then we can easily obtain the defining equations in the both claims. 

We now compute the genus only for $\Y^0_n$ because the other case would be done similarly.   
In the case when $n$ is odd, if $P=(x:y:z)$ is fixed by $\beta$, then it has to be 
$$P=(1:0:-1)\ {\rm or}\ P=(x:1:x).$$
In the latter case we see that $f_n(x,1,x)=(x-1)^2h(x)$ for some polynomial $h(x)\in k[x]$ with $h(\zeta^i)\not=0$ for any $0\le i\le n-1$ and deg$_xh(x)=n-2$. 
Under normalization of $C^0_n$, the proper transform of the point $(1:1:1)$ consists of two points interchanged by $\beta$. 
Therefore the number of fixed points of $\beta$ is $1+(n-2)=n-1$. By the Hurwitz formula for the double covering 
$C^0_n\lra \Y^0_n$ defined by $u,v$,  we have the claim. 

In the case when $n$ is even, the fixed points consist of $P=(x:1:x),\ x\not=\pm 1$. In fact we see that 
$f_n(x,1,x)=(x^2-1)^2h_1(x)$ for some polynomial $h_1(x)\in k[x]$ with $h(\zeta^i)\not=0$ for any $0\le i\le n-1$ and deg$_xh_1(x)=n-4$. 
The claim follows by the Hurwitz formula again.   

For $\widetilde\Y^0_n$, in case when $n$ is odd, the point $(1:0:-1:\pm 1)$ is no longer a fixed point of $\beta$ just 
because of the new parameter $w$ and then the number of fixed points are $2(n-2)$. In case when $n$ is even, it is $2(n-4)$. 
The formula follows from these data.  
\end{proof}

Since the geometric genus is stable under the normalization, we have obtained the following commutative diagram: 
\[
\begin{CD}
\widetilde C_n  @> 2:1  > >   \widetilde\Y_n=\widetilde C_n/\<\beta\>,  \\
  @V 2:1 VV @V 2:1 VV @.\\
 C_n  @> 2:1 > > \Y_n=C_n/\<\beta\> @. 
\end{CD}\ \qquad  
\begin{CD}
n^2-4n+1  @>  >>  \left\{\begin{array}{cc}
\frac{(n-1)(n-4)}{2} & \mbox{($n$ is odd)} \\
\frac{(n-2)(n-3)}{2} & \mbox{($n$ is even)}
\end{array}\right.  \\
  @V VV @V VV @.\\
 \frac{n^2-5n+2}{2}  @> >> \left\{\begin{array}{cc}
\frac{(n-1)(n-5)}{4} & \mbox{($n$ is odd)} \\
\frac{(n-2)(n-4)}{4} & \mbox{($n$ is even)}
\end{array}\right. @. \\  
\end{CD} \\
\] 

Let $n=2^k m\ge 6$ with $k\ge 1$ and  $m$ an odd number. 
Before giving the  proof of the main theorem we observe an automorphism 
$$\beta \alpha^{\frac{n}{m}}:(x:y:z)\mapsto (\zeta^{-1}_{2^k}z:y:\zeta_{2^k}x).$$
Clearly it is an involution.   Recall the quotient curves $\tzz_n=\widetilde{C}_n/\<\beta \alpha^{\frac{n}{m}} \>$ and 
$\zz_n=C_n/\<\beta \alpha^{\frac{n}{m}} \>$. Since $2^k|n$, there exists a polynomial $Q_n(X,Y)\in k[X,Y]$ such that 
$$Q_n(x+\zeta^{-1}_{2^k}z,xz)=x^n+z^n.$$
Then we have  the following result.

\begin{prop}\label{z}Let $n$ be as above.  
\begin{enumerate}

\item The curve $\zz_n$ has genus  
$g(\zz_n)=\ds\frac{n^2-6n+4}{4}$ and it has an affine model 
\[ Q_n(u,v)+(n-2)-nv+\sum_{i=2}^{\floor*{\frac {n} 2  }}a_i(v^i-iv+i-1)=0  \]
where $(u,v)=(x+\zeta^{-1}_{2^k}z,xz)$. 
\item The curve $\tzz_n$ has genus 
$g(\tzz_n)=\ds\frac{n^2-5n+2}{2}$
 and it has an affine model 
\[ Q_n(u,1-w^2)-2+nw^2+\sum_{i=2}^{\floor*{\frac {n} 2  }}a_i\{(1-w^2)^i-1-iw^2\}=0.  \]
\end{enumerate}
\end{prop}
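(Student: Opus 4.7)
The plan is to follow the template used for Proposition \ref{y}. First, I identify the invariants of the involution $\beta\alpha^{n/m}:(x:y:z)\mapsto(\zeta_{2^k}^{-1}z:y:\zeta_{2^k}x)$ on the affine chart $y=1$. The natural invariants are $u=x+\zeta_{2^k}^{-1}z$ and $v=xz$, because the map interchanges $x$ and $\zeta_{2^k}^{-1}z$, so their elementary symmetric functions $u$ and $\zeta_{2^k}^{-1}v$ are fixed. To rewrite $f_n(x,1,z)$ in terms of $u,v$, the only nontrivial term is $x^n+z^n$: using $\zeta_{2^k}^n=\zeta_{2^k}^{2^km}=1$ we get $x^n+z^n=x^n+(\zeta_{2^k}^{-1}z)^n$, which is the $n$-th power sum of the roots of $T^2-uT+\zeta_{2^k}^{-1}v=0$, hence, by Newton's identities, a polynomial $Q_n(u,v)\in k[u,v]$. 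All the remaining monomials in $f_n(x,1,z)$ involve only $xz=v$, so substitution gives the affine model of $\zz_n$ in (1). For $\tzz_n$, one lifts the involution to $\widetilde{C}_n$ by $w\mapsto w$ (which is an involution since $y^2-xz$ is already invariant) and uses $w^2=1-v$ to eliminate $v$, producing the equation in (2).

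For the genera I apply Riemann--Hurwitz to the degree-$2$ covers $C_n\to\zz_n$ and $\widetilde{C}_n\to\tzz_n$, which reduces everything to counting fixed points. Writing $\lambda$ for the projective scalar, any fixed point with $y\neq 0$ forces $\lambda=1$ and hence $z=\zeta_{2^k}x$; in the case $y=0$ one gets $\lambda=\pm 1$ and the equations $1+(\pm 1)^n\zeta_{2^k}^n=2\neq 0$ (using $n$ even), so no such points lie on $C^0_n$. Substituting $(x:1:\zeta_{2^k}x)$ into $f_n$ gives a polynomial of degree $n$ in $x$, which is separable for generic parameters, producing $n$ candidate fixed points. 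To see none of them is a node $P_j=(\zeta^j:1:\zeta^{-j})$ one needs $\zeta^{-j}=\zeta_{2^k}\zeta^j$, i.e.\ $\zeta_n^{-2j}=\zeta_n^m$; but $-2j$ is even and $m$ odd, so this congruence has no solution mod $n$. Thus the $n$ smooth fixed points on $C^0_n$ lift bijectively to $n$ fixed points on $C_n$, and Riemann--Hurwitz gives $2\cdot\tfrac{n^2-5n+2}{2}-2=2(2g(\zz_n)-2)+n$, which rearranges to $g(\zz_n)=\tfrac{n^2-6n+4}{4}$.

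For $\tzz_n$ the lifted involution fixes a point $(x:1:\zeta_{2^k}x:w)$ of $\widetilde{C}_n$ iff $w=\pm\sqrt{1-\zeta_{2^k}x^2}$, and for generic $a_i$ the polynomial cutting out fixed points on $C_n$ is coprime to $1-\zeta_{2^k}x^2$, so each of the $n$ fixed points downstairs lifts to two fixed points upstairs, giving $2n$ in total. Riemann--Hurwitz then yields $2(n^2-4n+1)-2=2(2g(\tzz_n)-2)+2n$, and hence $g(\tzz_n)=\tfrac{n^2-5n+2}{2}$.

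The main technical obstacle I foresee is the fixed-point accounting: one must verify simultaneously that (i) the candidate fixed points on $C^0_n$ are disjoint from the $n$ nodes (handled by the parity argument using that $m$ is odd), (ii) the polynomial cutting them out is separable for generic parameters, and (iii) the double cover $\widetilde{C}_n\to C_n$ is unramified at these fixed points for generic $a_i$, so that they each split into two. Each of these is an explicit Zariski-open condition on the parameter space and so holds for generic choices of $a_2,\dots,a_{\lfloor n/2\rfloor}$, which is exactly what is needed for the stated formulas.
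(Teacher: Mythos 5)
Your proposal is correct and follows essentially the same route as the paper: identify the $\beta\alpha^{n/m}$-invariants $u=x+\zeta_{2^k}^{-1}z$, $v=xz$ to get the affine models, count the $n$ (resp.\ $2n$) fixed points on $C_n$ (resp.\ $\widetilde C_n$), check they avoid the nodes, and apply Riemann--Hurwitz. You in fact supply details the paper leaves implicit (the parity argument $-2j\not\equiv m\pmod n$ for disjointness from the nodes, the exclusion of fixed points at $y=0$, and the lifting of each fixed point to two points of $\widetilde C_n$), so the write-up is, if anything, more complete than the original.
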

\begin{proof}
The number of the fixed points of the involution $\beta\alpha^{\frac{n}{m}}$ on $C_n$ (resp. $\widetilde{C}_n$) is $n$ (resp. $2n$). 
These are given by $(x:1:\zeta_{2^k}x)$ (resp. $(x:1:\zeta_{2^k}x:w)$) satisfying 
$f_n(x,1,\zeta_{2^k}x)=0$ (resp. $f_n(x,1,\zeta_{2^k}x)=0,\ w^2=1+x^2$). 
Note that any fixed point never intersects with $n$-nodes or its proper transform. 
The genus for each curve is obtained from Hurwitz formula. The defining equations can be obtained easily.   
\end{proof}

In the case when $n=4$, let us consider the involution defined by 
$$\alpha^{2}\beta=\beta\alpha^2:(x:y:z)\mapsto (-z:y:-x).$$  By direct calculation we have the following. 
    
\begin{prop}\label{w} The curve $\tww_4$ is of genus one and it is defined by 
$$4 u^2 + u^4 - 4 u^2 w^2 + 2 w^4 +a_2 w^4 =0$$
where $u=x-z$ and $w^2=1-xz$. 
\end{prop}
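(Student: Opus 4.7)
My approach has three steps: identify invariant coordinates for the involution $\tau := \beta\alpha^2$, push the defining equations through them, and read off the genus from the resulting plane quartic.

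On the affine chart $y = 1$ of $\C_4^0$, $\tau$ acts by $(x, z, w) \mapsto (-z, -x, w)$, so the invariant subring is generated by $u = x - z$, $v = xz$, and $w$. Thus $\tww_4$ is cut out in $(u, v, w)$ by $w^2 = 1 - v$ together with the image of $f_4(x, 1, z) = 0$. To simplify the latter, I would use the identity
\[f_4(x, y, z) = (x - z)^2 (x + z)^2 + (2 + a_2)(y^2 - xz)^2,\]
which is a quick direct expansion (both sides collect to $x^4 + z^4 + a_2 x^2 z^2 - (4 + 2a_2)xzy^2 + (2 + a_2) y^4$). Setting $y = 1$, applying $(x - z)^2 = u^2$, $(x + z)^2 = u^2 + 4v$, and $(y^2 - xz)^2 = (1 - v)^2$, and then substituting $v = 1 - w^2$, collapses everything to exactly $4u^2 + u^4 - 4u^2 w^2 + 2w^4 + a_2 w^4 = 0$.

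For the genus claim I would exhibit $\tww_4$ as birational to an elliptic curve. The substitution $u = tw$ factors the defining equation as
\[w^2 \bigl[4t^2 + w^2(t^4 - 4t^2 + 2 + a_2)\bigr] = 0,\]
so off $w = 0$ the bracketed factor can be solved for $w^2$, and the rescaling $s := w(t^4 - 4t^2 + 2 + a_2)/(2t)$ yields
\[s^2 = -t^4 + 4t^2 - (2 + a_2).\]
For generic $a_2$ the right-hand side is a separable quartic in $t$, so this is a smooth hyperelliptic curve of genus one.

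The only mild subtlety is verifying that $u = tw$ really is birational (with inverse $w = 2ts/(t^4 - 4t^2 + 2 + a_2)$, $u = tw$) rather than a rational map with positive-dimensional fibers; this is immediate once the formulas are in hand. Alternatively one can analyze the singularities of the plane quartic directly: the lowest-order term at the origin is $4u^2$, so $(0, 0)$ is an $A_3$-type singularity with $\delta = 2$, giving geometric genus $3 - 2 = 1$ from the arithmetic genus of a smooth plane quartic. Either way the computation is precisely the "direct calculation" that the authors invoke.
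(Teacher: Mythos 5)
Your proposal is correct and is exactly the ``direct calculation'' the paper invokes without writing out: the identity $f_4=(x-z)^2(x+z)^2+(2+a_2)(y^2-xz)^2$ checks out and immediately yields the stated quartic in $(u,w)$, and both of your genus arguments (the birational model $s^2=-t^4+4t^2-(2+a_2)$ and the tacnode count $g=3-\delta=3-2$) are sound for generic $a_2$. Nothing further is needed.
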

Let us recall the quotient map $\C_n\lra \tyy_n$. It's pullback induces ${\rm Jac}(\tyy_n)\lra {\rm Jac}(\C_n)\stackrel{1-\gamma^\ast}{\lra} J_n$ which factors through 
${\rm Prym}(\tyy_n/\yy_n)$. Hence we have a natural map ${\rm Prym}(\tyy_n/\yy_n)\lra {\rm Jac}(\C_n)$. 
Similarly we also have a natural map ${\rm Prym}(\tzz_n/\zz_n)\lra {\rm Jac}(\C_n)$. The quotient maps  
$\C_n\lra \txx_n\lra \tee_n$ induce a natural map ${\rm Jac}(\tee_n)\lra J_n$ as well. 
The following fact is necessary to prove the main theorem. 

\begin{prop}\label{di} 
\begin{enumerate}
\item The natural map ${\rm Prym}(\tyy_n/\yy_n)\lra J_n$ induces an identification 
$$H^0({\rm Prym}(\tyy_n/\yy_n),\Omega^1)\simeq \<\widetilde{\omega}_{r,s}-\widetilde{\omega}_{s,r},\widetilde{\theta}_i+
\beta^\ast(\widetilde{\theta}_i)\ |\ 
\tiny{\begin{array}{c}
0\le r<s\le n-4\\
 2\le i \le \floor{\frac{n}{2}}
\end{array}}
\>$$ 
\item 
The natural map ${\rm Prym}(\tzz_n/\zz_n)\lra J_n$ induces an identification 
$$H^0({\rm Prym}(\tzz_n/\zz_n),\Omega^1)\simeq \<\widetilde{\omega}_{r,s}+(\beta \alpha^{\frac{n}{m}})^\ast(\widetilde{\omega}_{s,r}),\widetilde{\theta}_i+
(\beta \alpha^{\frac{n}{m}})^\ast(\widetilde{\theta}_i)\ |\ 
\tiny{\begin{array}{c}
0\le r<s\le n-4\\
 2\le i < \floor{\frac{n}{2}}
\end{array}}
\>$$ 
\item 
The natural map ${\rm Jac}(\tee_n)\lra J_n$ induces an identification 
$$H^0({\rm Jac}(\tee_n),\Omega^1)\simeq \Big\< w(xz-1)^j\frac{dx}{\widehat{f}_{n,z}}\  \Big|\ 
0\le j \le \floor{\frac{n-4}{2}}
\Big\>.$$ 
\end{enumerate}
\end{prop}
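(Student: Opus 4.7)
The plan is to realize each left-hand side as a specific multi-eigenspace of a finite abelian subgroup of ${\rm Aut}_k(\C_n)$ acting on $H^0(\C_n,\Omega^1)$ by pullback, and to match this eigenspace explicitly against the basis of Theorem \ref{dif}(2).

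For part (1), since $\tyy_n=\C_n/\langle\beta\rangle$, pullback identifies $H^0(\tyy_n,\Omega^1)$ with the $\beta^*$-invariant subspace of $H^0(\C_n,\Omega^1)$; the covering involution of $\tyy_n\to\yy_n$ is the descent of $\gamma$, so
\[
H^0({\rm Prym}(\tyy_n/\yy_n),\Omega^1)\simeq H^0(\C_n,\Omega^1)^{\beta^*=1,\,\gamma^*=-1}.
\]
The main computation is the $\beta^*$-action on the basis of Theorem \ref{dif}. From the $x\leftrightarrow z$ symmetry of $\widehat f_n$ (which gives $\widehat f_{n,z}(z,x)=\widehat f_{n,x}(x,z)$) together with the relation $\widehat f_{n,x}\,dx+\widehat f_{n,z}\,dz=0$ on $C_n$, one derives $\beta^*\omega=-\omega$ with $\omega=dx/\widehat f_{n,z}$, hence $\beta^*(\widetilde\omega_{r,s})=-\widetilde\omega_{s,r}$ and $\beta^*(\widetilde\theta_i)=\widetilde\theta_{n-i}$. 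Extracting $+1$-eigenvectors produces exactly the spanning set in the statement.

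Part (2) is parallel with $\tau:=\beta\alpha^{n/m}$ and $\eta:=\zeta^{n/m}$. Differentiating the identity $\widehat f_n(\zeta^k x,\zeta^{-k}z)=\widehat f_n(x,z)$ with respect to each variable gives $\widehat f_{n,z}(\zeta^k x,\zeta^{-k}z)=\zeta^k\widehat f_{n,z}(x,z)$ and $\widehat f_{n,x}(\zeta^k x,\zeta^{-k}z)=\zeta^{-k}\widehat f_{n,x}(x,z)$; combined with the symmetry argument from part (1), these yield $\tau^*\omega=-\omega$, $\tau^*(\widetilde\omega_{r,s})=-\eta^{s-r}\widetilde\omega_{s,r}$, and (using $\eta^n=1$) $\tau^*(\widetilde\theta_i)=\eta^{-i}\widetilde\theta_{n-i}$. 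The $+1$-eigenvectors are precisely the combinations in the statement. The subtle point for even $n$ is that $\widetilde\theta_{n/2}$ is by itself $\tau^*$-invariant but must be excluded; this exclusion, reflected in the strict inequality $i<\lfloor n/2\rfloor$, is forced by $\dim\,{\rm Prym}(\tzz_n/\zz_n)=g(\tzz_n)-g(\zz_n)$ from Proposition \ref{z}.

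For part (3), work through the chain $\C_n\to\widetilde\X_n\to\tee_n$ and use the hyperelliptic model $U^2=Wf^0_n(1-W)$ of Proposition \ref{x}(iv). A standard basis of $H^0(\tee_n,\Omega^1)$ is $\{W^j\,dW/U\,:\,0\le j\le\lfloor(n-4)/2\rfloor\}$, which pulls back through $(W,U)=(w^2,uw)$ to $2w^{2j}\,dw/u$ on $\widetilde\X_n$. Rather than explicitly propagate this to $\C_n$, the cleanest verification is to check directly that each form $w(xz-1)^j\omega$ with $0\le j\le\lfloor(n-4)/2\rfloor$ is (i) holomorphic, since $(xz-1)^j$ is a polynomial in $xz$ and so $w(xz-1)^j\omega$ lies in the $k$-span of the diagonal basis elements $\widetilde\omega_{\ell,\ell}$ of Theorem \ref{dif}(2); (ii) $\alpha^*$-invariant, since $\alpha^*$ fixes $w$, $xz$, and $\omega$; and (iii) $\gamma^*$-antiinvariant, since $\gamma^*(w)=-w$. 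The count $\lfloor(n-4)/2\rfloor+1=\lfloor(n-2)/2\rfloor=g(\tee_n)$ matches $\dim H^0({\rm Jac}(\tee_n),\Omega^1)$, and the $\delta\gamma$-invariance needed for descent from $\widetilde\X_n$ to $\tee_n$ is then automatic. The main obstacle throughout is the bookkeeping of pullbacks through the chain of quotients, largely sidestepped by the eigenspace-and-dimension-count strategy.
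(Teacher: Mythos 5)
Your overall strategy --- realizing each left-hand side as a simultaneous eigenspace of $H^0(\C_n,\Omega^1)$ under the relevant involutions and matching it against the basis of Theorem \ref{dif}(2) --- is the same as the paper's. Parts (1) and (3) go through; in (3) your direct check that the forms $w(xz-1)^j\,dx/\widehat{f}_{n,z}$ span the $\alpha^\ast$-invariant, $\gamma^\ast$-anti-invariant ``diagonal'' subspace $\langle\widetilde{\omega}_{\ell,\ell}\rangle$ is arguably cleaner than the paper's route, which descends through $\txx_n$ via the identity $w^i\,dw/u=w^{i+1}(xz-1)\,dx/\widehat{f}_{n,z}$.

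The gap is in part (2), at exactly the one point where the paper's proof has real content: the fate of $\widetilde{\theta}_{n/2}$. You assert that $\widetilde{\theta}_{n/2}$ is $\tau^\ast$-invariant and then ``exclude'' it because $\dim{\rm Prym}(\tzz_n/\zz_n)$ demands it. That is not an argument: if $\widetilde{\theta}_{n/2}$ genuinely lay in $H^0(\C_n,\Omega^1)^{\tau^\ast=1,\gamma^\ast=-1}$, it could not be discarded, and the claimed identification would simply be false (the eigenspace would be one dimension too large). What actually happens is that $\widetilde{\theta}_{n/2}$ is $\tau^\ast$-\emph{anti}-invariant: with $\tau$ given, as in the paper, by $(x:y:z)\mapsto(\zeta_{2^k}^{-1}z:y:\zeta_{2^k}x)$, one finds $\tau^\ast(\widetilde{\theta}_i)=\zeta_{2^k}^{-i}\widetilde{\theta}_{n-i}$, and since $\zeta_{2^k}^{n/2}=(-1)^m=-1$ the combination $\widetilde{\theta}_{n/2}+\tau^\ast(\widetilde{\theta}_{n/2})=(1+\zeta_{2^k}^{-n/2})\widetilde{\theta}_{n/2}$ vanishes identically; this is precisely why the index range stops strictly below $\lfloor n/2\rfloor$. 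Your sign comes out wrong because you took $\eta=\zeta^{n/m}=\zeta_m$, for which $\eta^{n/2}=1$. (To be fair, the paper's own definition of $\alpha$ would give $\zeta_m$ for $\alpha^{n/m}$, while its displayed formula for $\beta\alpha^{n/m}$ and all the computations making Proposition \ref{z} and this proposition true use $\zeta_{2^k}$, i.e.\ the involution is really $\beta\alpha^{m}$; you must adopt the latter convention.) Fix the root of unity, redo the eigenvalue computation for $\widetilde{\theta}_{n/2}$, and let the dimension count serve only as a consistency check rather than as the reason for the exclusion.
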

\begin{proof}The first claim follows directly from Theorem \ref{dif}. For the second claim, we have  to remark that when $i=\frac{n}{2}$, 
$$\theta_{\frac{n}{2}}+(\beta \alpha^{\frac{n}{m}})^\ast(\theta_{\frac{n}{2}})=
\{(1+\zeta^{\frac{n}{2}}_{2^k})x^{\frac{n}{2}}-(1+\zeta^{-\frac{n}{2}}_{2^k})z^{\frac{n}{2}}\}\frac{dx}{w\widehat{f}_{n,z}}=0$$
since $\zeta^{\frac{n}{2}}_{2^k}=(-1)^m=-1$. Note that $(\beta \alpha^{\frac{n}{m}})^\ast(\ds\frac{dx}{\widehat{f}_{n,z}})=
\frac{dz}{\widehat{f}_{n,x}}=-\frac{dx}{\widehat{f}_{n,z}}$. 

For the third claim, notice that  the natural map $\C_n\lra \txx_n$ induces a natural identification  
$$H^0(\txx_n,\Omega^1)=H^0(\C_n,\Omega^1)^{\alpha^\ast=1}$$
and it is generated by $w^{i}\ds\frac{dw}{u},\ i=0,\ldots, n-4$. Let us remark that $w^i\ds\frac{dw}{u}=w^{i+1}(xz-1)\frac{dx}{\widehat{f}_{n,z}}$ since  
$$u=\frac{s+\frac{A}{2}}{(xz-1)^2}=\frac{x\widehat{f}_{n,x}-z\widehat{f}_{n,z}}{2(xz-1)^2}$$ 
and $2wdw=-zdx-xdz=(x\widehat{f}_{n,x}-z\widehat{f}_{n,z})\ds\frac{dx}{\widehat{f}_{n,z}}$. 
The claim follows from $H^0(\tee_n,\Omega^1)=H^0(\txx_n,\Omega^1)^{\gamma^\ast=-1}=H^0(\txx_n,\Omega^1)^{(\delta\gamma)^\ast=1}$. 
\end{proof}

We now give a structure theorem for a decomposition of the Prym variety $J_n:={\rm Prym}(\widetilde{C}_n/C_n)$: 
\begin{theorem}\label{main} Assume $n\ge 4$.  

\begin{enumerate}
\item  If $n$ is odd, then 
$$J_n\stackrel{k}{\sim}{\rm Jac}(\widetilde{\mathcal{E}}_n)\times {\rm Prym}(\widetilde{\mathcal{Y}}_n/{\mathcal{Y}}_n)^2$$
where the factor ${\rm Prym}(\widetilde{\mathcal{Y}}_n/{\mathcal{Y}}_n)$ is an abelian variety over $k$ whose endomorphism ring contains $\Q(\zeta_n+\zeta^{-1}_n)$.  
Here $\zeta_n\in \mathbb{C}$ is a primitive $n$-th root of unity. 
\item  If $n=2^km\ge 6$ with $k\ge 1$ and  $m$ an odd number, then 
$$J_n\stackrel{k}{\sim}{\rm Jac}(\widetilde{\mathcal{E}}_n)\times {\rm Prym}(\widetilde{\mathcal Y}_n/{\mathcal Y}_n)\times 
{\rm Prym}(\widetilde{\mathcal Z}_n/{\mathcal Z}_n)$$
where the latter two Prym varieties are an abelian variety over $k$ whose endomorphism ring contains $\Q(\zeta_n+\zeta^{-1}_n)$ 
respectively. 
\item  If $n=4$, then 
$$J_4\stackrel{k}{\sim}{\rm Prym}(\widetilde{\mathcal Y}_4/{\mathcal Y}_4)\times 
{\rm Prym}(\widetilde{\mathcal W}_4/{\mathcal W}_4)$$
\end{enumerate}
\end{theorem}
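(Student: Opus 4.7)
Following the method of \cite{GY}, my plan is to identify each claimed factor of $J_n$ as the image of a natural pullback morphism into $J_n$ and verify the isogeny by matching cotangent subspaces of $H^0(J_n,\Omega^1)\subset H^0(\C_n,\Omega^1)$ via the explicit bases from Theorem \ref{dif} and Proposition \ref{di}. The relevant morphisms are ${\rm Jac}(\tee_n)\to J_n$ induced by $\C_n\to\txx_n\to\tee_n$, ${\rm Prym}(\tyy_n/\yy_n)\to J_n$ induced by $\C_n\to\tyy_n$, and (in case (2)) ${\rm Prym}(\tzz_n/\zz_n)\to J_n$ induced by $\C_n\to\tzz_n$, respectively (in case (3)) ${\rm Prym}(\tww_4/\ww_4)\to J_4$ induced by $\C_4\to\tww_4$.

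By Proposition \ref{di} the image cotangent subspaces are characterized as joint eigenspaces of the commuting operators $\alpha^*$, $\beta^*$, $(\beta\alpha^{n/m})^*$ on $H^0(J_n,\Omega^1)$: the factor ${\rm Jac}(\tee_n)$ realizes the $\alpha^*$-invariant part, while the Prym factors cut out the $\beta^*$- and $(\beta\alpha^{n/m})^*$-invariant parts respectively (each within the $\gamma^*$-antiinvariant ambient space). Direct inspection of the bases $\{\tilde\omega_{r,s},\tilde\theta_i\}$ shows the relevant subspaces meet trivially, and the dimension count $\frac{n-3}{2}+2\cdot\frac{(n-1)(n-3)}{4}=\frac{n(n-3)}{2}$ in case (1), together with the analogous sums in the other cases, shows they span all of $H^0(J_n,\Omega^1)$. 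Poincar\'e complete reducibility then yields the decompositions up to isogeny, modulo the multiplicity issue in case (1).

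For the endomorphism ring statement: $\alpha$ itself does not preserve ${\rm Prym}(\tyy_n/\yy_n)\subset J_n$ since $\alpha\beta\ne\beta\alpha$, but the symmetrized operator $T:=\alpha^*+(\alpha^{-1})^*$ commutes with $\beta^*$ (because $\beta\alpha\beta^{-1}=\alpha^{-1}$), so $T$ does preserve the $\beta^*$-invariant subvariety. The minimal polynomial of $T$ over $\Q$ agrees with that of $\zeta_n+\zeta_n^{-1}$, giving $\Q(\zeta_n+\zeta_n^{-1})\hookrightarrow {\rm End}_k({\rm Prym}(\tyy_n/\yy_n))\otimes\Q$; the identical argument with $(\beta\alpha^{n/m})^*$ in place of $\beta^*$ treats ${\rm Prym}(\tzz_n/\zz_n)$.

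The principal obstacle is case (1), where ${\rm Prym}(\tyy_n/\yy_n)$ enters with multiplicity $2$. I plan to handle it by first applying Corollary \ref{easy} to $\C_n$ with $H=\langle\beta,\gamma\rangle\cong(\Z/2\Z)^2$ and then using the standard Prym splittings to extract the intermediate isogeny
\[
J_n \stackrel{k}{\sim} {\rm Prym}(\tyy_n/\yy_n)\times {\rm Prym}(\tyy'_n/\yy_n), \qquad \tyy'_n:=\C_n/\langle\beta\gamma\rangle,
\]
and then establishing the auxiliary isogeny ${\rm Prym}(\tyy'_n/\yy_n)\stackrel{k}{\sim} {\rm Jac}(\tee_n)\times{\rm Prym}(\tyy_n/\yy_n)$ by decomposing its cotangent space: the $\beta^*$-antiinvariant, $\gamma^*$-antiinvariant differentials on $\C_n$ split into the $\alpha^*$-invariant diagonal piece (which matches ${\rm Jac}(\tee_n)$ via Proposition \ref{di}(3)) and an off-diagonal complement whose $\alpha$-eigenvalue pattern under the swap $d\leftrightarrow -d$ coincides with that of the cotangent space of ${\rm Prym}(\tyy_n/\yy_n)$. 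Cases (2) and (3) follow the same template, except that the second reflection conjugacy class of $D_{2n}$ present for even $n$ contributes a genuinely distinct Prym factor (${\rm Prym}(\tzz_n/\zz_n)$ or ${\rm Prym}(\tww_4/\ww_4)$) rather than a duplicate of ${\rm Prym}(\tyy_n/\yy_n)$.
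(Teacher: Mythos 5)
Your overall strategy is the paper's: both arguments run on Theorem \ref{dif} and Proposition \ref{di}, identify ${\rm Jac}(\tee_n)$ with the $\alpha^\ast$-invariant part of $H^0(J_n,\Omega^1)$ and the Prym factors with the $+1$-eigenspaces of $\beta^\ast$, resp.\ $(\beta\alpha^{n/m})^\ast$, inside the $\gamma^\ast$-anti-invariants, and extract the real multiplication from $T=\alpha^\ast+(\alpha^{-1})^\ast$, which commutes with $\beta^\ast$ because $\beta\alpha\beta=\alpha^{-1}$. Where you genuinely diverge is the multiplicity two in case (1): the paper splits off $A\stackrel{k}{\sim}{\rm Jac}(\tee_n)$, passes to $B=J_n/A$, decomposes $B\stackrel{k}{\sim}B_+\times B_-$ by the $\pm1$-eigenspaces of $\beta^\ast$, and gets $B_+\stackrel{k}{\sim}B_-\stackrel{k}{\sim}{\rm Prym}(\tyy_n/\yy_n)$ because $\alpha$ moves $B_-$ off itself; you instead apply Corollary \ref{easy} to $H=\langle\beta,\gamma\rangle$ to obtain $J_n\stackrel{k}{\sim}{\rm Prym}(\tyy_n/\yy_n)\times{\rm Prym}(\tyy'_n/\yy_n)$ with $\tyy'_n=\C_n/\langle\beta\gamma\rangle$, and then split the auxiliary Prym. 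That is a legitimate and arguably cleaner bookkeeping of the factor of $2$, at the price of introducing the extra curve $\tyy'_n$; the dimension counts you quote are correct.

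Two cautions. Minor: $\alpha^\ast$ and $\beta^\ast$ do \emph{not} commute (the group is dihedral), so there are no "joint eigenspaces of the commuting operators"; you only ever use the eigenspaces one at a time, so nothing breaks, but the phrasing must go. Substantive: the step "the off-diagonal complement has the same $\alpha$-eigenvalue pattern as $H^0({\rm Prym}(\tyy_n/\yy_n),\Omega^1)$, hence ${\rm Prym}(\tyy'_n/\yy_n)\stackrel{k}{\sim}{\rm Jac}(\tee_n)\times{\rm Prym}(\tyy_n/\yy_n)$" is not yet a proof: isomorphic cotangent representations do not by themselves produce an isogeny of abelian subvarieties. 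You need an actual morphism, e.g.\ the composite ${\rm Prym}(\tyy_n/\yy_n)\to J_n\stackrel{\alpha^\ast}{\to}J_n\to{\rm Prym}(\tyy'_n/\yy_n)$, and a check on the basis of Theorem \ref{dif} that its differential is injective with image the off-diagonal complement of the ${\rm Jac}(\tee_n)$-piece; concretely, since $\beta^\ast\widetilde{\omega}_{r,s}=-\widetilde{\omega}_{s,r}$ and $(\beta\gamma)^\ast\widetilde{\omega}_{r,s}=\widetilde{\omega}_{s,r}$, the class $\widetilde{\omega}_{r,s}-\widetilde{\omega}_{s,r}$ is sent (up to the factor $\tfrac12$) to $(\zeta^{r-s}-\zeta^{s-r})(\widetilde{\omega}_{r,s}+\widetilde{\omega}_{s,r})$, which is nonzero for odd $n$ and $r\neq s$, and similarly $\widetilde{\theta}_i+\widetilde{\theta}_{n-i}\mapsto(\zeta^{i}-\zeta^{-i})(\widetilde{\theta}_i-\widetilde{\theta}_{n-i})$. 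This is precisely the role played in the paper by the observation that $\alpha$ does not preserve $B_-$; once you add it, your plan closes up, and the same device is what makes $T$ act nontrivially enough to give $\Q(\zeta_n+\zeta_n^{-1})$ (note $\Q[T]$ is a priori a product of fields, so one should point to an eigenvalue $\zeta_n^{j}+\zeta_n^{-j}$ with $\gcd(j,n)=1$, e.g.\ $j=1$ from $\widetilde{\omega}_{0,1}$, to see that $\Q(\zeta_n+\zeta_n^{-1})$ actually occurs).
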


\begin{proof}Let $A$ be the connected component of ${\rm Ker}(1-\alpha^\ast:J_n\lra J_n)$. 
The inclusion $A\subset J_n$ and the map ${\rm Jac}(\tee_n)\lra J_n$ explained before induces an identification 
$$H^0(A,\Omega^1)=H^0(J_n,\Omega^1)^{\alpha^\ast=1}=H^0(\txx_n,\Omega^1)^{\gamma^\ast=-1}=H^0(\tee_n,\Omega^1)$$
which implies that $A\stackrel{k}{\sim}{\rm Jac}(\tee_n)$. 

Put $B=J_n/A$. Since $\beta\alpha \beta=\alpha^{-1}$, the pullback $\beta^\ast$ acts on $B$. 
By Theorem \ref{dif} one can see that 
$$H^0(B,\Omega^1)=H^0(\C_n,\Omega^1)^{\alpha^\ast\not=1,\gamma^\ast=-1}
=\<\widetilde{\omega}_{r,s}, \widetilde{\theta}_i\ |\ 
\tiny{\begin{array}{c}
0\le r,s,r+s\le n-4,r\not=s\\
 2\le i \le n-2 
\end{array}}
\>$$
where $\alpha^\ast$ acts by 
$$
\alpha^\ast(\widetilde{\omega}_{r,s})=\zeta^{r-s}\widetilde{\omega}_{r,s},\ 
\alpha^\ast(\widetilde{\theta}_i)=\zeta^i \widetilde{\theta}_i.
$$
By using this basis  when $n$ is odd one can check that the number of eigenvectors for $\beta^\ast$ with the eigenvalue $+1$ 
is same as those with the eigenvalue $-1$.    
Hence 
the connected component $B_{\pm}$ of the kernel of the map $1\pm \beta^\ast:B\lra B$ is an abelian variety of dimension 
$\frac{1}{2}{\rm dim}B$ respectively 
and we have $B_+\times B_-\stackrel{k}{\sim} B$. The inclusion $B_-\subset B$ and the composition ${\rm Prym}(\tyy_n/\yy_n)\lra J_n\lra B$ induce 
an isogeny $B_-\stackrel{k}{\sim}{\rm Prym}(\tyy_n/\yy_n)$ since 
$H^0(B_-,\Omega^1)=H^0(B,\Omega^1)^{\beta^\ast=1}=H^0({\rm Prym}(\tyy_n/\yy_n),\Omega^1)$ by Proposition \ref{di}-(1). 
By looking the action on $H^0(B_-,\Omega^1)$ we see that $\alpha$ never preserves $B_-$ and this fact gives rise to an isogeny 
$B_-\stackrel{k}{\sim} B_+$.  Since $\alpha^\ast+(\alpha^{-1})^\ast$ commutes with $\beta^\ast$, it acts on $B_{\pm}$. Hence, 
$${\rm End}_k({\rm Prym}(\tyy_n/\yy_n))\otimes_\Z\Q={\rm End}_k(B_\pm)\otimes_\Z\Q\supset \Q(\zeta_n+\zeta^{-1}_n).$$
This proves the first claim. 

For the second claim we consider 
the connected component $B'_{\pm}$ of the kernel of the map $1\pm (\beta\alpha^{\frac{n}{m}})^\ast:B\lra B$. Then natural maps 
$B'_-\subset B\longleftarrow J_n \longleftarrow {\rm Prym}(\tzz_n/\zz_n)$ imply an isogeny $B'_-\stackrel{k}{\sim}{\rm Prym}(\tzz_n/\zz_n)$ since 
$$H^0(B'_-,\Omega^1)=H^0(B,\Omega^1)^{(\beta\alpha^{\frac{n}{m}})^\ast=1}=H^0({\rm Prym}(\tzz_n/\zz_n),\Omega^1).$$
Since $$1-\beta^\ast=1-(\beta \alpha^\frac{n}{m})=(1-(\beta\alpha^{\frac{n}{m}})^\ast)\sum_{j=0}^{m-1}((\beta\alpha^{\frac{n}{m}})^j)^\ast$$, 

$B'_+\subset B\longleftarrow J_n \stackrel{\alpha^\ast}{\longleftarrow} {\rm Prym}(\tyy_n/\yy_n)$ imply an isogeny $B'_+\stackrel{k}{\sim}{\rm Prym}(\tyy_n/\yy_n)$ since 
$$H^0(B'_+,\Omega^1)=H^0(B,\Omega^1)^{(\beta\alpha^{\frac{n}{m}})^\ast=-1}=H^0({\rm Prym}(\tyy_n/\yy_n),\Omega^1).$$
We can easily see that $H^0(B,\Omega^1)^{\beta^\ast=1}$ is a complement of $H^0(B'_+,\Omega^1)$ in $H^0(B,\Omega^1)$ and 
this implies that ${\rm Prym}(\tyy_n/\yy_n)$ is also a complement of $B_-$ in $B$ under the isogeny induced by the previous argument for odd $n$. 
Hence we have 
$$J_n\stackrel{k}{\sim}B_+\times B_-\stackrel{k}{\sim}{\rm Prym}(\tyy_n/\yy_n)\times {\rm Prym}(\tzz_n/\zz_n).$$
It follows from the same reason as above that 
$${\rm End}_k({\rm Prym}(\tzz_n/\zz_n))\otimes_\Z\Q={\rm End}_k(B_+)\otimes_\Z\Q\supset \Q(\zeta_n+\zeta^{-1}_n).$$
The last claim is just a direct computation and so omitted. This completes the proof.  
\end{proof}

\section{Explicit computations for $n=4,5,6,7,8$}

In this section we explicitly decompose $J_n$ for $4\le n\le  8$.  
Most of the computations are straight forward and we skip some of the details. 
We start with the case of $n=5$ and put the case of $n=4$ at the last part. 

\subsection{$C^0_5$ as the first interesting case}

In the case when $n=5$ this curve appears in previous study on a decomposition of the intermediate Jacobians for some cubic threefolds \cite{GY} but 
it was not studied because of the singularities.  
Put $a=a_2$. Recall the  defining equation:
\begin{equation} 
C^0_5:   \quad x^5+z^5+3y^5-5xy^3z+a(x^2yz^2-2xy^3z+y^5) = 0
\end{equation}
By Theorem \ref{main} we see that 
\[ J_5\stackrel{k}{\sim} E\times B^2,\]
where $E$ is an elliptic curve over $k$ and $B$ is an abelian surface over $k$ such that  ${\rm End}_k(B)\otimes_\Z\Q$ contains 
$\Q(\sqrt{5})$. 
In what follows we will compute the equations of $E$ and a hyperelliptic curve $C$ whose Jacobian is 
isogenous over $k$ to $B$. The strategy is similar to \cite{GY}.   
For each curve appears here we work on a smooth model instead of the singular model.  By using the automorphisms defined in (\ref{auto2}) we have the commutative diagram 

\[
\begin{CD}
\C_5  @> 5:1 >>  \C_5/\alpha
@>2:1>>E:= \C_5/\langle \alpha,\delta\gamma \rangle,  \\
  @V 2:1 VV @V 2:1 VV @.\\
 C_5  @>5:1 >> C_5/\alpha @. 
\end{CD}\ \qquad  
\begin{CD}
6  @>  >>  2
@>>> 1  \\
  @V VV @V VV @.\\
 1  @> >> 1 @. \\  
\end{CD} \\
\]
where the right diagram indicates the genus of each curve on the left side. 
The  vertical arrows on the left hand side  stand for the quotient map by $\gamma$ and 
the upper right arrow to $E$ means the quotient map by the group $\alpha,\delta\gamma$ where $\delta$ is 
the hyperelliptic involution of $ \C_5/\alpha$. As we will see below 
the curve $\C_5/\alpha$ turns out to be hyperelliptic. 

Explicit equations of the curves in the diagram are obtained step by step.  For simplicity we let $y=1$.  Then 
\[ s:=x^5  \quad \textit{and} \quad  t:=xz\]  
are fixed by $\alpha$ and this gives rise to 
\begin{equation}
s^2+(a(t-1)^2-5t+3)s+t^5=0.
\end{equation}
Substituting $s$ by 
\[ u=(1-t)s/2+(a(t-1)^2-5t+3)/2,\]
  we obtain 
\[C_5/\alpha : \quad u^2=f^0_5(t),\]
where
\[ f^0_5(t):=-t^3+\frac{1}{4}(a^2-8)t^2- \frac{1}{2}(a^2+5a+6)t+\frac{1}{4}(a+3)^2.\]
Since the vertical arrow in the above diagram is given by $w^2=1-xz=1-t$, one has  
\[\C_5/\alpha:   \quad u^2=f^0_5(1-w^2)     \]
with the hyperelliptic involution $\delta:(s,w)\mapsto (-s,w)$. 
Then clearly 
\[ \delta\gamma:(u,w)\mapsto (-u,-w)\]
 and this gives rise to the smooth model $E$ as below: 
\[   E: \quad y^2=-\frac{1}{4}(4a+15)x^3+\frac{5}{2}(a+4)x^2+\frac{1}{4}(a^2-20)x+1,\]
where $(x,y)=   \left(\ds\frac{1}{w^2},\frac{u}{w^3} \right)$.
By construction we see that 
\begin{prop}
The elliptic curve $E$ is a $k$-simple factor of $J_5$. 
\end{prop}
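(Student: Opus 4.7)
The plan is to read off the result directly from Theorem \ref{main}(1), combined with the explicit computation just preceding the statement. Indeed, the decomposition for $n=5$ asserts
\[
J_5 \stackrel{k}{\sim} {\rm Jac}(\tee_5)\times {\rm Prym}(\tyy_5/\yy_5)^2,
\]
so an elliptic factor is already supplied once we identify the Weierstrass model $E$ with ${\rm Jac}(\tee_5)$; the remaining $k$-simplicity claim is then automatic for any genuine elliptic curve.

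For the identification, I would simply verify that the successive substitutions $s=x^5$, $t=xz$, $u=(1-t)s/2+(a(t-1)^2-5t+3)/2$, $w^2=1-t$, and finally $(X,Y)=(1/w^2,u/w^3)$ carried out above produce a birational equivalence over $k$ between the model $U^2=Wf_5^0(1-W)$ of $\tee_5$ supplied by Proposition \ref{x}(iv) and the displayed Weierstrass form of $E$. This is routine tracking of coordinates and is essentially already written out in the preceding paragraphs; the upshot is an honest $k$-isogeny $E \sim_k {\rm Jac}(\tee_5)$, which together with Theorem \ref{main}(1) realises $E$ as a direct factor of $J_5$ up to $k$-isogeny.

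For $k$-simplicity I would invoke the elementary fact that every elliptic curve over $k$ is a simple abelian variety, since a $1$-dimensional abelian variety admits no nontrivial proper abelian subvariety. It remains only to confirm that the right-hand cubic in the Weierstrass equation is separable, i.e.\ that its discriminant as a polynomial in $X$ is not identically zero in $a$; a one-line direct calculation verifies this, and the finitely many bad values of $a$ at which the discriminant happens to vanish can be absorbed into the standing genericity hypothesis that $C_5^0$ is smooth away from its $n$ nodes. The only (mild) obstacle is therefore to write out this discriminant explicitly and confirm its non-vanishing as an element of $k[a]$; there is no deeper difficulty beyond what Theorem \ref{main} and the explicit model for $E$ already provide.
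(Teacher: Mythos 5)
Your proposal matches the paper's argument: the paper likewise deduces the proposition "by construction," i.e.\ from the explicit chain of substitutions identifying $E=\C_5/\langle\alpha,\delta\gamma\rangle$ with the displayed Weierstrass model together with the decomposition $J_5\stackrel{k}{\sim}E\times B^2$ from Theorem \ref{main}(1), with simplicity being automatic for a one-dimensional abelian variety. Your added remark about verifying separability of the cubic is a reasonable point of care that the paper leaves to its standing genericity hypothesis, but it does not change the route.
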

Similarly we consider the following commutative diagram but this time we use $\beta$ instead of $\alpha$: 
\[
\begin{CD}
\C_5  @> 2:1 >>  \C_5/\beta,  \\
  @V 2:1 VV @V 2:1 VV @.\\
 C_5  @>2:1 >> C_5/\beta @. 
\end{CD}\hspace{5mm}  
\begin{CD}
6  @>  >>  2  \\
  @V VV @V VV @.\\
 1  @> >> 0. @. 
\end{CD}
\]
The genus of $\C_5/\beta$ (resp. $C_5/\beta$) can be computed by counting fixed points of $\beta$ on  $\C_5$ (resp. $C_5$). 
Clearly $T_1=x+z,\ T_2=xz$ are invariant under $\beta$ and these are generators of the function field $k(C_5/\beta)$. 
Then we have an affine model of $C_5/\beta$: 
\[3 + a + T^5_1 - 5 T_2 - 2 a T_2 - 5 T^3_1 T_2 + a T^2_2 + 5 T_1 T^2_2=0.\]
Since it is rational, we would find a rational parameter. 
Put 
\begin{equation}
\label{t1}
t_1=\frac{-5-2 a-5 T^3_1+2 a T_2+10 T_1T_2}{-1+T_1+T^2_1}.
\end{equation}

Then one has 
\[-25 - 8 a + t^2_1 + 10 T_1 + 4 a T_1 - 5 T^2_1=0\]
and it has a rational solution $(t_1,T_1)=(a+5,a+2)$. 
Putting $x=\ds\frac{t_1-a-5}{T_1-a-2}$ and substituting this into the above equation we obtain 
\begin{equation}\label{para}
(t_1,T_1)= \left(\frac{25 + 5 a - 10 x - 6 a x + 5 x^2 + a x^2}{5-x^2},\frac{a - 10 x - 2 a x + 2 x^2 + a x^2}{-5+x^2} \right).
\end{equation}

To gain the equation of $\C_5/\beta$ we substitute $w^2=1-xz=1-T_2$ and 
(\ref{para}) into (\ref{t1}). Simplifying  the factors we obtain 
\[ w^2=-\frac{(-10 - a + a x)g(x)}{(x^2-5)^2(-25 - 5 a + 5 x + 3 a x)},\quad g(x)=\sum_{i=0}^4c_ix^i, \]
where 
\[
\begin{split}
c_4 & =5 + 5 a + a^2, \\
c_3 & =-2 (5 + a) (5 + 2 a), \\
 c_2 & =2 (50 + 20 a + 3 a^2), \\
 c_1 & =-2 (5 + a) (-5 + 2 a), \\
 c_0 & =-25 - 5 a + a^2. 
\end{split}
\] 
We introduce a new parameter 
\[ y=(-25 - 5 a + 5 x + 3 a x)(x^2-5)w \]
 and then we have a birational model of 
 $\C_5/\beta$:
\[C:y^2=h(x),\ h(x)=-(3 a x+ 5 x-25 - 5 a ) (ax- a -10) g(x).\]
The discriminant of $h(x)$ is 
\[ \Delta_h = 2^{30}5^5(15 + 4 a)^2 (-25 - 5 a + a^2)^{14}.\] 

Summing up by Theorem \ref{main} we have.

\begin{prop}\label{n5}
Let $B={\rm Jac}(C)$. Then 
$$J_5\stackrel{k}{\sim}E\times B^2$$
and ${\rm End}_k(B)\otimes_\Z\Q$ contains $\Q(\sqrt{5})$. 
\end{prop}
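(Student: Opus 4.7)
The plan is to deduce Proposition \ref{n5} as a specialization of Theorem \ref{main}(1) to $n=5$, followed by identification of the two isogeny factors with the explicit curves $E$ and $C$ computed above. Theorem \ref{main}(1) applied to $n=5$ immediately yields
$$J_5 \stackrel{k}{\sim} \Jac(\widetilde{\mathcal{E}}_5) \times {\rm Prym}(\widetilde{\mathcal{Y}}_5/\mathcal{Y}_5)^2,$$
together with the statement that the endomorphism algebra of ${\rm Prym}(\widetilde{\mathcal{Y}}_5/\mathcal{Y}_5)$ contains $\Q(\zeta_5+\zeta_5^{-1}) = \Q(\sqrt{5})$. So only the two identifications $\Jac(\widetilde{\mathcal{E}}_5) \sim E$ and ${\rm Prym}(\widetilde{\mathcal{Y}}_5/\mathcal{Y}_5) \sim \Jac(C)$ remain to be established.

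The first identification is immediate from Proposition \ref{x}(iv): the smooth model of $\widetilde{\mathcal{E}}_5$ was computed above to be exactly the elliptic curve $E$ displayed. For the second, I would invoke Proposition \ref{y}(1): when $n=5$, one gets $g(\mathcal{Y}_5) = (5-1)(5-5)/4 = 0$, so $\mathcal{Y}_5$ is rational and $\Jac(\mathcal{Y}_5) = 0$. Therefore the Prym coincides with the full Jacobian of the top curve:
$${\rm Prym}(\widetilde{\mathcal{Y}}_5/\mathcal{Y}_5) = \Jac(\widetilde{\mathcal{Y}}_5).$$
Proposition \ref{y}(2) gives $g(\widetilde{\mathcal{Y}}_5) = (5-1)(5-4)/2 = 2$, and the hyperelliptic curve $C : y^2 = h(x)$ obtained above is a smooth projective model of $\widetilde{\mathcal{Y}}_5$, so $\Jac(\widetilde{\mathcal{Y}}_5) = \Jac(C) = B$. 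Combining these identifications produces the claimed decomposition $J_5 \stackrel{k}{\sim} E \times B^2$ and transfers the real multiplication from ${\rm Prym}(\widetilde{\mathcal{Y}}_5/\mathcal{Y}_5)$ to $B$.

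The main obstacle is the explicit derivation of the Weierstrass model $C$ itself. This requires (i) parametrizing the genus-zero quotient $\mathcal{Y}_5$ by a $k$-rational point (the visibly $k$-rational solution $(t_1,T_1) = (a+5,a+2)$ suffices, and crucially keeps the parametrization, and hence the entire isogeny decomposition, defined over $k$ rather than over $\bar k$); (ii) pulling back the double cover $w^2 = 1-xz$ along this parametrization; and (iii) simplifying the resulting equation into standard hyperelliptic form. The discriminant computation $\Delta_h = 2^{30}\cdot 5^5 (15+4a)^2 (-25 - 5a + a^2)^{14}$ then witnesses that $C$ is smooth of genus $2$ for generic $a$, legitimizing the birational identification with $\widetilde{\mathcal{Y}}_5$ used in step (iii). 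No further input is needed: the abstract decomposition comes entirely from Theorem \ref{main}, and this proposal just fits the symbols $E$ and $B$ to the two factors already promised by that theorem.
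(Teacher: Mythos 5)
Your proposal is correct and follows essentially the same route as the paper: specialize Theorem \ref{main}(1) to $n=5$, identify $\Jac(\widetilde{\mathcal{E}}_5)$ with the explicitly computed elliptic curve $E$, and use $g(\mathcal{Y}_5)=0$ to see that ${\rm Prym}(\widetilde{\mathcal{Y}}_5/\mathcal{Y}_5)=\Jac(\widetilde{\mathcal{Y}}_5)=\Jac(C)$ with $C$ the genus-$2$ hyperelliptic model obtained from the rational parametrization of $\mathcal{Y}_5$. The paper's ``proof'' is exactly this: the abstract decomposition from Theorem \ref{main} plus the step-by-step derivation of the models of $E$ and $C$ that you cite.
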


%\begin{proof}
%Let us work on smooth models. Let $E_0$ be a connected component of the kernel  $1-\alpha^\ast$ on $J_5$. Then we have a non-trivial homomorphism 
%$E_0\lra Jac(\C_5/\langle \alpha, \delta\gamma\rangle)=E$. 
%It has to be isogenous since  $E$ is an elliptic curve. Put $A=J_5/E_0$ and let $B_{\pm}$ be 
%the connected component of the kernel of $\beta^\ast\pm 1$ on $A$. 
%It is clear that $B_-\stackrel{k}{\sim}{\rm Jac}(C)$ since $C\stackrel{k}{\simeq} \C_5/\beta$. 
%Since $\alpha^\ast$ does not preserve $B_-$ (observe $\beta^\ast\alpha^\ast\beta^\ast=(\alpha^\ast)^{-1}$), we must have $B_+\stackrel{k}{\sim} B_-$. %Summing up we have 
%
%\[ J_5\stackrel{k}{\sim}E_0\times J_5/E_0 \stackrel{k}{\sim} E\times B_+\times B_-\sim E\times B^2,\ B:=B_-\stackrel{k}{\sim}{\rm Jac}(C),\]
%
%where the second map is given by  $(\beta^\ast-1,\beta^\ast+1)$ with kernel contained in $(J_5/E_0)[2]$.  The endomorphism %$\alpha^\ast+(\alpha^\ast)^{-1}$ commutes with $\beta^\ast\pm 1$. Thus it generates  $\Q(\sqrt{5})$ in ${\rm End}_k(B)\otimes_\Z\Q$.    %
%\end{proof}

%*****************************

%*******************************
\subsection{The case $n=6$}  The genus 4 curve $C^0_6$ has equation  as follows
\begin{eqnarray}\label{C_6^0}
{x}^{6}+{z}^{6}+4\,{y}^{6}-6\,xz{y}^{4}+a_{{2}} \left( {x}^{2}{z}^{2}{y}^{2}-2\,xz{y}^{4}+{y}^{6} \right) \\
+a_{{3}} \left( {x}^{3}{z}^{3}-3\,xz{y}^{4}+2\,{y}^{6} \right) =0 \nonumber
\end{eqnarray}
As in the previous case we work on a smooth model for each curve. 
By using the automorphisms defined in (\ref{auto2}) we have the commutative diagram 
\[
\begin{CD}
\C_6  @> 6:1 >>  \widetilde{\X}_6
@>2:1>>\widetilde{\mathcal{E}}_6:= \C_6/\langle \alpha,\delta\gamma \rangle,  \\
  @V 2:1 VV @V 2:1 VV @.\\
 C_6  @>6:1 >> \X_6 @. 
\end{CD}\ \qquad  
\begin{CD}
  13 @>  >>  3@>>> 2  \\
     @V VV @V VV @.\\
 4  @> >> 1 @. \\  
\end{CD} \\
\]
By Proposition \ref{x}  we have an equation defining $\X_6$:  
\[ \X_6: \quad  u^2= f_6^0 (t)   \]
where $f_6^0 (t)$ has degree 4 and is given as follows  %
\begin{eqnarray}
f^0_6(t)&=& 
\frac{1}{4}(-2+a_3) (2+a_3)t^4+\frac{1}{2}(-4+a_2 a_3+a^2_3)t^3 \nonumber \\ 
&& +\frac{1}{4}(-12+a^2_2-12 a_3-3a^2_3)t^2-\frac{1}{2} (2+a_2+a_3)(4+a_2+2 a_3)t \\
&&+\frac{1}{4}(4+a_2+2 a_3)^2 \nonumber
\end{eqnarray}

Since the vertical arrow in the above diagram is given by $w^2=1-xz=1-t$, one has the genus 3 hyperelliptic curve   
\[\widetilde\X_6 :   \quad u^2=f^0_6(1-w^2)     \]
with the hyperelliptic involution $\delta:(u, w)\mapsto (-u, w)$.   Then clearly 
\[ \delta \, \gamma:   (u, w)\mapsto (-u,-w)\]
 and this gives rise to an affine smooth model of $\widetilde{\mathcal{E}}_6$ as below:
\begin{eqnarray}
\widetilde{\mathcal{E}}_6:y^2&=& 
\frac{1}{4} (-2 + a_3) (2 + a_3)x^5+\frac{1}{2} (12 - a_2 a_3 - 3 a_3^2)x^4\nonumber  \\
&&+\frac{1}{4}(-60 + a^2_2 - 12 a_3 + 6 a_2 a_3 + 9 a^2_3)x^3+(20 + 3 a_2 + 10 a_3)x^2\\
&&+(-6 - a_2 - 3 a_3)x \nonumber
\end{eqnarray}   
where $(x,y)=   \left(w^2,uw \right)$.

Next we determine the  equations for the quotient curves $\Y_6$ and $\widetilde\Y_6$ and their genii.  
Using the automorphisms defined in (\ref{auto2}) we have the following diagram
\[
\begin{CD}
\C_6  @> 2:1 >>  \widetilde\Y_6=\C_6/\beta,  \\
  @V 2:1 VV @V 2:1 VV @.\\
 C_6  @>2:1 >> \Y_6=C_6/\beta @. 
\end{CD}\hspace{5mm}  
\begin{CD}
13  @>  >>  6  \\
  @V VV @V VV @.\\
 4  @> >> 2. @. 
\end{CD}
\]
By Proposition \ref{y} 
an defining equation of $\Y_6$ is given by 
\begin{eqnarray}
 g_6(u_,v)&:=&{u}^{6}-6\,{u}^{4}v+9\,{u}^{2}{v}^{2}+a_3{v}^{3}-2\,{v}^{3}+{v}^{2
}a_{{2}}-2\,va_{{2}}\\
&&-3va_{{3}}-6\,v+a_{{2}}+2\,a_{{3}}+4
=0  \nonumber
\end{eqnarray}
Recall that this is of genus 2. We now try to get a Weierstrass model of  $Y_6$ as follows. 
Observe that it has a singularity at $(u,v)=(1,1)$ and introduce $(u_1,v_1)=(u-1,v-1)$. 
By blowing at $(u_1,v_1)=(0,0)$ which corresponds to considering $v_1=tu_1$ with a new parameter $t$. 
Then we have an equation $f(u_1,t)=0$ which is degree 4 (resp. 3) in $u_1$ (resp. $t$). This 
equation has a singularity at $(u_1,t)=(-2,0)$. We substitute $u_2=u_1+2$ and take the blowing up $u_2=u_3t$. Then we have an 
equation 
$$Q_2(u_3)t^2+Q_1(u_3)t=Q_0(u_3),\ Q_i\in k[u_3],\ {\rm deg}_{u_3}Q_i=2+i\ {\rm for}\  i=0,1,2$$
which gives rise to a defining equation of $\Y_6$. Substituting $t$ with $\ds\frac{t}{Q_2(u_3)}$ we will get 
a hyperelliptic model. To be more precise, put $x=\ds\frac{u^2-1}{v-1}$ and 
$$y=\frac{u (-2- a_3 + 3 u^4 + u^6 + (6 +3a_3 - 6 u^2  - 6 u^4)v+  (- 3- 3a_3  + 9 u^2 )v^2 - 
   (2 + a_3) v^3)}{(v-1)^3}.$$
Then we have an affine smooth model:
$$\Y_6:y^2=(x^3- 6 x^2 +9x+a_3-2) (4 x^3 - 12 x^2 +6 x  - (a_2+3a_3) x +a_3-2).$$   
Next we consider a singular model of $\widetilde\Y_6$ 
$$g_6(u,1-w^2)=0.$$
This curve has two automorphisms 
$$\tau_1:(u,w)\mapsto (-u,w),\ \tau_2:(u,w)\mapsto (-u,-w).$$ 
Let us compute the equations of two curves $\widetilde\Y_6/\<\tau_i\>,\ i=1,2$ which turn out to be factors of Prym$(\widetilde\Y_6/\Y_6)$. 
Consider the equation $g^1_6(u',w)=0$ so that $g^1_6(u^2,w)=g_6(u,w)$ (hence we putted $u'=u^2$). Then,  
the curve defined by $g^1_6(u',w)=0$ has a singularity at $(u',w)=(1,0)$ in the $(u',w)$-plane. Substitute $u_1=u'-1$ and set $u_1=u_2w^2$. 
Then we have the 
equation 
$$Q^1_3(u_2)w^2=Q^1_2(u_2),\ Q_i\in k[u_2],\ {\rm deg}_{u_2}Q^1_i=i\ {\rm for}\  i=2,3.$$ 
Substituting $t$ with $\ds\frac{t}{Q^1_3(u_2)}$ we will have 
a hyperelliptic model. To be more precise, put  
Put $$x=\ds\frac{1-u'}{w^2},\ y=\frac{(2-a_3)w^6+9(u'-1)w^4+6(u'-1)^2w^2+(u'-1)^3}{w^5}.$$
This gives rise to an affine smooth model of $\widetilde\Y_6/\<\tau_1\>$:
$$\widetilde\Y_6/\<\tau_1\>:y^2=(3x^2-6x-3-a_2-3a_3)(x^3-6x^2+9x-2+a_3).$$ 

For the curve $\widetilde\Y_6/\<\tau_2\>$ we consider two functions $s=u w$ and $t=u^2$ which is invariant under $\tau_2$. 
Consider the polynomial $g^2_6(s,t)$ so that $g^2_6(uw,u^2)=g_6(u,w)$. This curve has singularities at $(s,t)=(0,0),(0,1)$. 
Therefore put $s^2=Xt(t-1)$. Then,  after deleting the factor $(t-1)^2$ the polynomial $g^2_6(s,t)=0$ implies   
$$4 - t + (12  - 6 t) X +( 6 - 9 t    -
    a_2 - 3a_3) X^2 +(2- a_3- 2 t + a_3 t )X^3=0.$$
Solving this in $t$, we have 
$$t=\frac{-4 - 12 X  +(- 6 + a_2 + 3a_3) X^2 +(-2+a_3) X^3}{-1 - 
 6 X - 9 X^2 +(- 2 + a_3)X^3}.$$ 
Therefore we have 
$$s^2=Xt(t-1)=\frac{X (-3 - 6 X + (3 + a_2+3a_3)X^2)}{-1 - 
 6 X - 9 X^2 +(- 2 + a_3)X^3}.$$
Put $Y=s(-1 - 
 6 X - 9 X^2 +(- 2 + a_3)X^3)$ and then we have a hyperelliptic model 
$$\widetilde\Y_6/\<\tau_2\>:Y^2=X \{(3 + a_2+3a_3)X^2 - 6 X-3 \}\{(- 2 + a_3)X^3 - 9 X^2-1-6 X-1\}.$$ 
Next, is left  to analyze $\tzz_6$ and $\zz_6$. For these curves  we have the following diagram
\[
\begin{CD}
\C_6  @> 2:1 >>  \tzz_6=\C_6/\<\alpha^3\beta\>,  \\
  @V 2:1 VV @V 2:1 VV @.\\
 C_6  @>2:1 >> \zz_6=C_6/\<\alpha^3\beta\> @. 
\end{CD}\hspace{5mm}  
\begin{CD}
13  @>  >>  4  \\
  @V VV @V VV @.\\
 4  @> >> 1. @. 
\end{CD}
\]

Notice that the curve $\tzz$ inherits two involutions 
$$\sigma_1:(u,w)\mapsto (-u,w),\ \sigma_2:(u,w)\mapsto (-u,-w).$$

By Proposition \ref{z} 
an defining equation of $\zz_6$ is given by 
\begin{eqnarray}
 h_6(u_,v)&:=&u^6  + 6 u^4 v + 9 u^2 v^2 + 2 v^3- 6 v+4\\
&&+{v}^{2}a_{{2}}-2\,va_{{2}}-3\,va_{{3}}-6\,v+a_{{2}}+2\,a_{{3}}+4 \nonumber
=0.  
\end{eqnarray}
Since it is of genus 1 we try to find an Weierstrass model for $\zz_6$ as follows. 
Consider the equation $h^1_6(u_1,v)$ so that $h^1_6(u^2,v)=h_6(u,v)$. 
Then the curve defined by $h^1_6(u_1,v)=0$ is rational and it has the singularities 
at $(u_1,v)=(0,1),(-3,1)$ on $(u_1,v)$-plane. Put $v=v_1+1$ and $v_1=v_2u_1(u_1+3)$. 
Further we put $u_1=\ds\frac{t}{v_2}$ to remove the remaining singularities. Then we have 
$$v_2=\frac{-1 - 6 t - 9 t^2 - 2 t^3 - a_3 t^3}{t (6 + 6 t + a_2 + 3a_3 + 3a_3 t)}$$ 
and this gives 
$$u^2=u_1=\ds\frac{t}{v_2}=\frac{t^2 (6 + 6 t + a_2 + 3a_3 + 3a_3 t)}{-1 - 6 t - 9 t^2 - 2 t^3 - a_3 t^3}.$$
Put $s=\ds\frac{-1 - 6 t - 9 t^2 - 2 t^3 - a_3 t^3}{t}u$. Then we have a smooth affine Weierstrass model
$$\zz_6:s^2=-\{ (3a_3+6)t + a_2 + 3a_3+6 \}\{(a_3+2) t^3 + 9 t^2+6t+1 \}.$$
On the other hand 
$$w^2=1-v=-t\Big(2+\frac{t}{v_2}\Big)=\frac{t (-3 - 18 t +(- 21 + a_2+ 3a_3) t^2)}{1 + 6 t + 9 t^2 + 
 (2+a_3)t^3}.$$
 Put $w'=(1 + 6 t + 9 t^2 + 
 (2+a_3)t^3)w$ and then we have an affine smooth model of the quotient curve $\tzz_6/\<\sigma_1\>$: 
 $$\tzz_6/\<\sigma_1\>: w'^2=t (-3 - 18 t +(- 21 + a_2+ 3a_3) t^2)(1 + 6 t + 9 t^2 + 
 (2+a_3)t^3).$$
The computation for $\tzz_6/\<\sigma_2\>$ will be similar to what we carried out for $\widetilde\Y_6\<\tau_2\>$. 
Put $X-uw$ and $Y-w^2$. Then we have a singular model
\begin{eqnarray}\label{h26}
h^2_6(X,Y)&=&X^6 + 6 X^4 Y + 9 X^2 Y^2 - 6 X^4 Y^2 - 18 X^2 Y^3 + 9 X^2 Y^4\\
&& + 
 6 Y^5 - 2 Y^6 +a_2 Y^5  + 3a_3 Y^5  -a_3 Y^6=0.\nonumber
 \end{eqnarray}
 Put $x_1=X^2$ and $x_1=Y^2 x_2$ to remove the singularities. 
 Further we put 
 $$Y=\frac{y_1}{-2 + 9 x_2 - 6 x^2_2 + x^3_2 - a_3},\ y_1=y_2 - \frac{1}{2} (6 - 18 x_2 + 6 x^2_2 + a_2 + 3 a_3).$$
 The equation (\ref{h26}) gives us that a quadratic equation which has a solution $(x_2,y_2)=(0,\frac{1}{2} (6 + a_2 + 3 a_3))$. 
 Put $x_2=t(y_2-\frac{1}{2} (6 + a_2 + 3 a_3))$. Then we have 
 $$X^2=x_2=\frac{tq(t)^3(-1 + 18 t^2 + 3a_2 t^2  + 9a_3 t^2 )}{p(t)^2}$$
 where $q(t)=6 + 36 t + a_2 + 9a_2 t  + 3a_3 + 18a_3 t$ and $p(t)$ is a polynomial of (generic) degree 3 in $t$. 
We substitute $y=X\ds\frac{p(t)}{q(t)}$ and then obtain  
$$\tzz_6/\<\sigma_2\>:y^2=t\{ (36   + 9a_2   + 18a_3) t + 3a_3+ a_2+6\}\{( 18  + 3a_2   + 9a_3 )t^2 -1\}.$$  
Summing up we have the following result. 

\begin{prop}\label{j6}Keep the notation as above. Then 
$$J_6\stackrel{k}{\sim}{\rm Jac}(\widetilde{\mathcal{E}}_6)\times {\rm Jac}(\widetilde\Y_6/\<\tau_1\>)
\times {\rm Jac}(\widetilde\Y_6/\<\tau_2\>)
\times {\rm Jac}(\tzz_6/\<\sigma_1\>)
\times {\rm Jac}(\tzz_6/\<\sigma_2\>).$$
\end{prop}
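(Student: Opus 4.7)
The plan is to start from the three-factor decomposition
$$J_6\stackrel{k}{\sim}\Jac(\widetilde{\mathcal{E}}_6)\times \Prym(\widetilde\Y_6/\Y_6)\times \Prym(\tzz_6/\zz_6)$$
already supplied by Theorem \ref{main}(2) (using $n=6=2^1\cdot 3$), and then to split each of the two Prym pieces into a product of two Jacobians by invoking Corollary \ref{easy} with the Klein four-groups of commuting involutions exhibited on the hyperelliptic-type models of $\widetilde\Y_6$ and $\tzz_6$ constructed earlier in this subsection.

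First I would work with $\widetilde\Y_6$. The two involutions $\tau_1\colon(u,w)\mapsto(-u,w)$ and $\tau_2\colon(u,w)\mapsto(-u,-w)$ commute and generate a subgroup $H=\langle\tau_1,\tau_2\rangle\simeq\Z/2\Z\times\Z/2\Z$; its third nontrivial involution $\tau_1\tau_2\colon(u,w)\mapsto(u,-w)$ is precisely the deck involution of the double cover $\widetilde\Y_6\to\Y_6$, so $\widetilde\Y_6/\langle\tau_1\tau_2\rangle=\Y_6$. Corollary \ref{easy} therefore gives
$$\Jac(\widetilde\Y_6)\times \Jac(\widetilde\Y_6/H)^2\stackrel{k}{\sim}\Jac(\widetilde\Y_6/\langle\tau_1\rangle)\times \Jac(\widetilde\Y_6/\langle\tau_2\rangle)\times \Jac(\Y_6).$$
A short check on the affine model of $\widetilde\Y_6$ (quotienting the genus-$2$ curve $\Y_6$ by the induced involution $(x,y)\mapsto(x,-y)$, equivalently $\widetilde\Y_6$ by all of $H$) shows that $\widetilde\Y_6/H$ has genus $0$. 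Since $\Prym(\widetilde\Y_6/\Y_6)$ is (up to isogeny over $k$) the complement of $\Jac(\Y_6)$ in $\Jac(\widetilde\Y_6)$, the displayed relation then collapses to
$$\Prym(\widetilde\Y_6/\Y_6)\stackrel{k}{\sim}\Jac(\widetilde\Y_6/\langle\tau_1\rangle)\times \Jac(\widetilde\Y_6/\langle\tau_2\rangle).$$

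The same scheme is applied to $\tzz_6$ with $\sigma_1\colon(u,w)\mapsto(-u,w)$ and $\sigma_2\colon(u,w)\mapsto(-u,-w)$: again $\langle\sigma_1,\sigma_2\rangle\simeq\Z/2\Z\times\Z/2\Z$, the product $\sigma_1\sigma_2$ is the deck involution for $\tzz_6\to\zz_6$ (so the third quotient is $\zz_6$ itself), and the full quotient $\tzz_6/\langle\sigma_1,\sigma_2\rangle$ is verified to be rational. Corollary \ref{easy} and the same cancellation then yield
$$\Prym(\tzz_6/\zz_6)\stackrel{k}{\sim}\Jac(\tzz_6/\langle\sigma_1\rangle)\times \Jac(\tzz_6/\langle\sigma_2\rangle).$$
Substituting the two displayed isogenies into the Theorem \ref{main}(2) decomposition gives the five-factor product asserted in Proposition \ref{j6}.

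The only nontrivial points in carrying this out are bookkeeping rather than new ideas: one must confirm that the involutions $\tau_i$ (resp.\ $\sigma_i$) really lift to automorphisms of the normalized curves used in the theorem (which is clear because they commute with the hyperelliptic and deck involutions on the explicit affine models), that $\tau_1\tau_2$ (resp.\ $\sigma_1\sigma_2$) induces the covering $\widetilde\Y_6\to\Y_6$ (resp.\ $\tzz_6\to\zz_6$), and that the double quotients $\widetilde\Y_6/H$ and $\tzz_6/\langle\sigma_1,\sigma_2\rangle$ have genus $0$. I expect the genus $0$ verification of the total quotients to be the main (though still routine) obstacle, since the dimensional count $g(\widetilde\Y_6)=g(\widetilde\Y_6/\langle\tau_1\rangle)+g(\widetilde\Y_6/\langle\tau_2\rangle)+g(\Y_6)$, namely $6=2+2+2$, and analogously $g(\tzz_6)=g(\tzz_6/\langle\sigma_1\rangle)+g(\tzz_6/\langle\sigma_2\rangle)+g(\zz_6)$, namely $4=?+?+1$, must be matched with the dimensions read off the explicit Weierstrass equations produced above in order to justify the cancellation of $\Jac(\widetilde\Y_6/H)$ and $\Jac(\tzz_6/\langle\sigma_1,\sigma_2\rangle)$ in Corollary \ref{easy}.
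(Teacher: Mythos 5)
Your proposal is correct and follows essentially the same route as the paper: deduce the three-factor decomposition from Theorem \ref{main}(2), then apply Corollary \ref{easy} to the Klein four-groups $\langle\tau_1,\tau_2\rangle$ and $\langle\sigma_1,\sigma_2\rangle$, using that the third involution in each is the deck transformation of the relevant double cover and that the total quotients have genus zero, so the Jacobian of $\Y_6$ (resp.\ $\zz_6$) cancels to leave the Prym as the product of the two remaining Jacobians. The genus bookkeeping you flag does check out against the explicit models ($6=2+2+2$ and $4=2+1+1$), so nothing further is needed.
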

\begin{proof}By Theorem \ref{main} we have 
$$J_6\stackrel{k}{\sim}{\rm Jac}(\widetilde{\mathcal{E}}_6)\times {\rm Prym}(\tyy_6/\yy_6)\times {\rm Prym}(\tzz_6/\zz_6).$$
Applying Corollary \ref{easy} to $H=\<\tau_1,\tau_2\>$ for $\tyy_6$ we have 
$${\rm Jac}(\tyy_6)\stackrel{k}{\sim} {\rm Jac}(\yy_6)\times {\rm Jac}(\widetilde\Y_6/\<\tau_1\>)
\times {\rm Jac}(\widetilde\Y_6/\<\tau_2\>)$$
since the genus of $\tyy_6/H$ is zero. This implies ${\rm Prym}(\tyy_6/\yy_6)\stackrel{k}{\sim} {\rm Jac}(\widetilde\Y_6/\<\tau_1\>)
\times {\rm Jac}(\widetilde\Y_6/\<\tau_2\>)$. We have 
${\rm Prym}(\tzz_6/\zz_6)\stackrel{k}{\sim}  {\rm Jac}(\tzz_6/\<\sigma_1\>)
\times {\rm Jac}(\tzz_6/\<\sigma_2\>)$ as well. 
\end{proof}

%*******************************

\subsection{The case $n=7$}   The genus 8 curve $C_7$ has equation as follows
\begin{eqnarray}\label{C7}
{x}^{7}+{z}^{7}+5\,{y}^{7}-7\,xz{y}^{5}+a_{{2}} \left( {x}^{2}{z}^{2}{y}^{3}-2\,xz{y}^{5}+{y}^{7} \right) +\\
a_{{3}} \left( {x}^{3}{z}^{3}y-3\,xz{y}^{5}+2\,{y}^{7} \right) =0 \nonumber
\end{eqnarray}
By Proposition \ref{x} we have the following  diagram
\[
\begin{CD}
\C_7  @> 7:1 >>  \widetilde\X_7=\C_7/\alpha
@>2:1>> \widetilde{\mathcal{E}}_7=\C_7/\langle \alpha,\delta\gamma \rangle,  \\
  @V 2:1 VV @V 2:1 VV @.\\
 C_7  @>7:1 >> \X_7=C_7/\alpha @. 
\end{CD}\ \qquad  
\begin{CD}
  22 @>  >>  4
@>>> 2  \\
  @V VV @V VV @.\\
 8  @> >> 2 @. \\  
\end{CD} \\
\]

By Proposition \ref{x} we have 

\begin{eqnarray}
f^0_7(t)&=&-t^5+\frac{1}{4} (-8 + a^2_3)t^4+\frac{1}{2}(-6 + a_2 a_3 + a^2_3)t^3 \nonumber \\
&&+\frac{1}{4} (-16 + a^2_2 - 14 a_3 - 3 a^2_3)t^2-+\frac{1}{2}(2 + a_2 + a_3) (5 + a_2 + 2 a_3)t  \\
&&+\frac{1}{4}(5 + a_2 + 2 a_3)^2 \nonumber 
\end{eqnarray}

Since the vertical arrow in the above diagram is given by $w^2=1-xz=1-t$, one has  
\[\widetilde\X_7 :   \quad u^2=f^0_7(1-w^2)     \]
with the hyperelliptic involution $\delta:(u, w)\mapsto (-u, w)$.   Then clearly 
\[ \delta \, \gamma:   (u, w)\mapsto (-u,-w)\]
 and this gives rise to a smooth model of $ \widetilde {\mathcal E}_7 $ as below:
\begin{eqnarray}
\widetilde {\mathcal E}_7:y^2&=& \Big(-7 + \frac{a^2_3}{4} \Big)x^5+\Big(21 - \frac{1}{2} a_2 a_3 
+- \frac{3 a^2_3}{2}\Big)x^4  \nonumber \\
&&+\Big(-35 + \frac{a^2_2}{4} - \frac{7 a_3}{2} + \frac{3}{2} a_2 a_3 + \frac{9 a^2_3}{4}\Big)x^3 \\
&&+\Big(35 + \frac{7 a_2}{2} + \frac{23 a_3}{2}\Big)x^2+
\Big(-\frac{35}{4} - a_2 - 3 a_3\Big)x   \nonumber 
\end{eqnarray}
where $(x,y)=   \left(w^2,uw \right)$ and it is of genus $g= 2$.

Next we determine equations for the quotient curves $\Y_7$ and $\widetilde\Y_7$ and their genii.  
Start with a singular model of $C_7$ which has the equation given as in (\ref{C7}).  By using the automorphisms we have the following diagram 
\[
\begin{CD}
\C_7  @> 2:1 >>  \C_7/\beta,  \\
  @V 2:1 VV @V 2:1 VV @.\\
 C_7  @>2:1 >> C_7/\beta @. 
\end{CD}\hspace{5mm}  
\begin{CD}
22  @>  >>  9  \\
  @V VV @V VV @.\\
 8  @> >> 3. @. 
\end{CD}
\]

By Proposition \ref{y}  the curve $\Y_7=C_7 / \<\beta \>$ is of genus  3 and it is defined by 
\begin{eqnarray}
g_7(u,v)&:=&{u}^{7}-7\,{u}^{5}v+14\,{u}^{3}{v}^{2}-7\,u{v}^{3}+{v}^{3}a_{{3}}+{v}^
{2}a_{{2}}\\
&&-2\,va_{{2}}-3\,va_{{3}}-7\,v+a_{{2}}+2\,a_{{3}}+5=0. \nonumber  
\end{eqnarray}
The curve $\widetilde\Y_7$ is given by $g_7(u,1-w^2)=0$.  Summing up we have proved the following.

\begin{prop}Keep the notation as above. Then, 
$$J_7\stackrel{k}{\sim}{\rm Jac}(\widetilde{\mathcal{E}}_7)\times J_{7,\beta}^2 $$
where ${\rm End}(J_{7,\beta})\otimes_\Z\Q\supset \Q(\zeta_7+\zeta^{-1}_7)$. 
\end{prop}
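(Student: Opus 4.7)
The plan is to apply Theorem \ref{main}(1) directly, specialised to the odd value $n=7$. That result yields at once the isogeny decomposition
$$J_7 \stackrel{k}{\sim} \mathrm{Jac}(\widetilde{\mathcal{E}}_7) \times \mathrm{Prym}(\widetilde{\mathcal{Y}}_7/\mathcal{Y}_7)^2,$$
together with the containment $\mathrm{End}_k(\mathrm{Prym}(\widetilde{\mathcal{Y}}_7/\mathcal{Y}_7))\otimes_\Z\Q \supset \Q(\zeta_7+\zeta_7^{-1})$. Setting $J_{7,\beta}:=\mathrm{Prym}(\widetilde{\mathcal{Y}}_7/\mathcal{Y}_7)$ then matches the notation of the statement. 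What remains is bookkeeping: verifying that the polynomial $f_7^0(t)$ giving $\widetilde{\mathcal E}_7$ and the defining equation $g_7(u,v)=0$ of $\mathcal Y_7$ displayed just above are indeed the specialisations at $n=7$ of the general formulas in Propositions \ref{x} and \ref{y}, which is immediate from direct substitution.

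As a consistency check I would confirm the dimensions. By the formula in the introduction, $\dim J_7 = \tfrac{7\cdot 4}{2} = 14$. Proposition \ref{x}(iv) gives $g(\widetilde{\mathcal E}_7)=\lfloor (7-2)/2\rfloor = 2$, matching the degree-$5$ Weierstrass model above. Proposition \ref{y} in the odd case gives $g(\widetilde{\mathcal Y}_7) = \tfrac{(7-1)(7-4)}{2} = 9$ and $g(\mathcal Y_7) = \tfrac{(7-1)(7-5)}{4} = 3$, hence $\dim J_{7,\beta} = 9-3 = 6$. The sum $2 + 2\cdot 6 = 14$ agrees with $\dim J_7$, so the decomposition is accounted for dimensionally.

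There is no substantive obstacle: the proposition is a clean specialisation of the main theorem, and all the explicit plane/hyperelliptic models needed to identify the factors have already been produced in this subsection. The only place where something more would be wanted is a further presentation of $J_{7,\beta}$ as (isogenous to) the Jacobian of a lower-genus curve, as was done for $n=5$; the main difficulty there is that $J_{7,\beta}$ is $6$-dimensional and carries a faithful action of the cubic totally real field $\Q(\zeta_7+\zeta_7^{-1})$, so it is unreasonable to expect a hyperelliptic representative. A further quotient-by-subgroup analysis in the spirit of Corollary \ref{easy}, applied inside $\widetilde{\mathcal Y}_7$ using any involutions induced by $\alpha$-powers commuting with $\beta$ modulo $\langle \beta\rangle$, would be the natural route to pursue; but this lies outside what the present proposition asserts.
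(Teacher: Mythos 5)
Your proposal is correct and follows exactly the paper's route: the proposition is stated as an immediate specialisation of Theorem \ref{main}(1) to the odd case $n=7$, with $J_{7,\beta}=\mathrm{Prym}(\widetilde{\mathcal Y}_7/\mathcal Y_7)$ by the notational convention of the introduction, and the explicit models of $\widetilde{\mathcal E}_7$ and $\mathcal Y_7$ computed in this subsection supply the identifications. Your dimension check ($2+2\cdot 6=14$) is consistent with the genera recorded in the paper's diagrams.
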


It would be interesting to find a Jacobian which is isogenous to $J_{7,\beta}$. We expect there exists 
a smooth curve $D$ of genus 3 (maybe non-hyperelliptic) so that ${\rm Jac}(D)^2\stackrel{\overline{k}}{\sim} J_{7,\beta}$ with 
${\rm End}_{\overline{k}}({\rm Jac}(D))\otimes_\Z\Q\supset \Q(\zeta_7+\zeta^{-1}_7)$. 

%\begin{remark}Our two variable family gives a new exmaple of genus 3 family whose 
%Jacobian has Real multiplication by $\Q(\zeta_7+\zeta^{-1}_7)$. 
%Such Jacobian has been studied in various contexts \cite{Mestre},\cite{sakai}. 
%\end{remark}

%*******************************

\subsection{The case $n=8$} 
The genus 13 curve $C_8$ has the following equation: 
\begin{eqnarray}\label{C_7}
{x}^{8}+{z}^{8}+6-8\,xz+a_{{2}} \left( {z}^{2}{x}^{2}-2\,xz+1 \right) 
+a_{{3}} \left( {z}^{3}{x}^{3}-3\,xz+2 \right)\\ 
+a_{{4}} \left( {z}^{4}
{x}^{4}-4\,xz+3 \right) 
=0.  \nonumber
\end{eqnarray}
By Proposition \ref{x} we have the following  diagram
\[
\begin{CD}
\C_8  @> 8:1 >> \txx_8=\C_8/\alpha
@>2:1>> \tee_8=\C_8/\langle \alpha,\delta\gamma \rangle,  \\
  @V 2:1 VV @V 2:1 VV @.\\
 C_8  @>8:1 >> \xx_8=C_8/\alpha @. 
\end{CD}\ \qquad  
\begin{CD}
33 @> >> 5  
@>>> 3  \\
  @V VV @V VV @.\\
13  @> >> 2 @. \\  
\end{CD} \\
\]

By Proposition \ref{x} we have an affine smooth model 
\[ \X_8: \quad  u^2= f_8^0 (t)   \]
where $f_8^0 (t)$ has degree 6 and is given as follows  %
\[ 
\begin{split}
f_8^0(t)= &\frac{1}{4}\, \left( a_{{4}}{t}^{2}+2\,{t}^{2}+a_{{3}}t+2\,a_{{4}}t+4\,t+a_{{2
}}+2\,a_{{3}}+3\,a_{{4}}+6 \right)\\
&  \left( a_{{4}}{t}^{4}-2\,{t}^{4}+a
_{{3}}{t}^{3}+a_{{2}}{t}^{2}-2\,a_{{2}}t-3\,a_{{3}}t-4\,a_{{4}}t-8\,t+
a_{{2}}+2\,a_{{3}}+3\,a_{{4}}+6 \right) 
\end{split} \]
By Proposition \ref{x} one has  
\[\widetilde\X_8 :   \quad u^2=f^0_8(1-w^2)     \]
with the hyperelliptic involution $\delta:(u, w)\mapsto (-u, w)$ and it is of genus 5. The involution  
\[ \delta \, \gamma:   (u, w)\mapsto (-u,-w)\]
 gives rise to a smooth model $ \widetilde {\mathcal E}_8$ as below: 
$$\tee_8:y^2=xf^0_8(1-x)$$
where $(x,y)=   \left(w^2,wu \right)$ and it is of genus 3.

Next we determine the equations for the quotient curves $\Y_8, \widetilde\Y_8$, $\ww_8$, and $\tww_8$. 
By Proposition \ref{y} and \ref{w} we have the following diagram: 
\[
\begin{CD}
\tzz_8 @< 1:2 <<  \C_8  @> 2:1 >> \tyy_8=\C_8/\beta,  \\
 @V 2:1 VV   @V 2:1 VV @V 2:1 VV @.\\
\zz_8 @< 1:2 << C_8  @>2:1 >> \yy_8=C_8/\beta @. 
\end{CD}\hspace{5mm}  
\begin{CD}
13 @<  <<33  @>  >>  15  \\
  @V  VV @V VV @V VV @.\\
5 @<  << 13  @> >> 6. @. 
\end{CD}
\]

By Proposition \ref{y}  the curve $\Y_8=C_8 / \<\beta \>$ is of genus 6 and it is defined by 
\begin{eqnarray}
g_8(u,v)&:=& u^8 - 8 v - 8 u^6 v + 20 u^4 v^2 - 16 u^2 v^3 + 2 v^4 \nonumber  \\
&&+6+ a_2 - 
 2a_2 v  + a_2v^2 + 2a_3  - 3a_3 v  \\
&&+a_3 v^3  + 3a_4 - 
 4a_4 v  + a_4 v^4=0  \nonumber  
\end{eqnarray}
where $(u,v)=(x+z,xz)$. 
The curve $\widetilde\Y_8$ is given by $g_8(u,1-w^2)=0$. 
Recall that as in the case of $n=6$, the curve $\tyy_8$ admits two involutions 
$$\tau_1:(u,w)\mapsto (-u,w),\ \tau_2:(u,w)\mapsto (-u,-w).$$
Put $H=\<\tau_1,\tau_2\>$. Then we have the quotient curve $\tzz_8/H=\zz_8/\tau_3$ where $\tau_3:(u,v)\mapsto (-u,v)$. 
It is easy to see that this curve has genus 2. By Corollary \ref{easy} we have 
$${\rm Jac}(\tyy_8)\times {\rm Jac}(\tyy_8/H)^2\stackrel{k}{\sim} {\rm Jac}(\yy_8)\times {\rm Jac}(\tyy_8/\<\tau_1\>)
\times {\rm Jac}(\tyy_8/\<\tau_2\>)$$
which induces 
$${\rm Prym}(\tyy_8/\yy_8)\stackrel{k}{\sim} {\rm Prym}((\tyy_8/\<\tau_1\>)/(\tyy_8/H))\times 
 {\rm Prym}((\tyy_8/\<\tau_2\>)/(\tyy_8/H)).$$

On the other hand by Proposition \ref{z}  the curve $\zz_8=C_8 / \<\beta \alpha \>$ is of genus 5 and it is defined by 
\begin{eqnarray}
 -6 - u^8 + 8 v + 8 \zeta^{-1}_8 u^6 v + 20 \zeta^{2}_8 u^4 v^2 - 
 16 \zeta_8 u^2 v^3 + 2 v^4\nonumber   \\
-a_2(v-1)^2-a_3( v-1)^2 ( v+2)-a_3(v-1)^2 (v^2 + 2 v + 3)=0   \nonumber  
\end{eqnarray}
where $(u,v)=(x+\zeta^{-1}_8 z,xz)$. Replacing $u$ with $\zeta^3_{16}u$ we have a nicer form 
\begin{eqnarray}
h_8(u,v)&:=& -6 + u^8 + 8 v + 8  u^6 v + 20  u^4 v^2 +16 u^2 v^3 + 2 v^4-a_2(v-1)^2 \\
&&-a_3( v-1)^2 ( v+2)-a_3(v-1)^2 (v^2 + 2 v + 3)=0.   \nonumber  
\end{eqnarray}

The curve $\tzz_8$ with genus 13 is given by $h_8(u,1-w^2)=0$. 
Notice that the curve $\tzz_8$ admits two involutions 
$$\kappa_1:(u,w)\mapsto (-u,w),\ \kappa_2:(u,w)\mapsto (-u,-w).$$
Put $H'=\<\kappa_1,\kappa_2\>$. Then we have an elliptic curve $E_8:=\tzz_8/H'$ over $k$. As seen before we have 
$${\rm Prym}(\tzz_8/\zz_8)\stackrel{k}{\sim} {\rm Prym}((\tzz_8/\<\kappa_1\>)/E_8)\times 
 {\rm Prym}((\tzz_8/\<\kappa_2\>)/E_8).$$
 
Summing up we have the following .

\begin{prop}\label{j8}Keep the notation as above. Then,  
$$J_8\stackrel{k}{\sim}{\rm Jac}(\widetilde{\mathcal{E}}_8)\times {\rm Prym}((\tyy_8/\<\tau_1\>)/(\tyy_8/H))\times 
 {\rm Prym}((\tyy_8/\<\tau_2\>)/(\tyy_8/H))$$
$$\times {\rm Prym}((\tzz_8/\<\kappa_1\>)/E_8)\times 
 {\rm Prym}((\tzz_8/\<\kappa_2\>)/E_8)$$
where  the endomorphism rings tensored by $\Q$ for the last four factors contain $\Q(\zeta_8+\zeta^{-1}_8)=\Q(\sqrt{2})$.  
\end{prop}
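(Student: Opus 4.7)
The plan is to follow the same strategy as the proof of Proposition~\ref{j6}, but applied independently to each of the two nontrivial Prym factors produced by the main theorem. First, I would invoke Theorem~\ref{main}(2) with $n=8=2^3\cdot 1$ to obtain the coarse splitting
$$J_8\stackrel{k}{\sim}{\rm Jac}(\widetilde{\mathcal{E}}_8)\times {\rm Prym}(\widetilde{\mathcal{Y}}_8/\mathcal{Y}_8)\times {\rm Prym}(\widetilde{\mathcal{Z}}_8/\mathcal{Z}_8),$$
where the two Prym factors already carry endomorphism algebras containing $\Q(\zeta_8+\zeta_8^{-1})=\Q(\sqrt 2)$.

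Next I would refine the $\widetilde{\mathcal{Y}}_8$-factor by applying Corollary~\ref{easy} to the Klein four group $H=\langle\tau_1,\tau_2\rangle\subset {\rm Aut}_k(\widetilde{\mathcal{Y}}_8)$. The essential observation is that on $\widetilde{\mathcal{Y}}_8$ one has the relation $\tau_1\tau_2=\gamma$, so the three order-two subgroups of $H$ are $\langle\tau_1\rangle,\langle\tau_2\rangle,\langle\gamma\rangle$, with quotients $\widetilde{\mathcal{Y}}_8/\langle\tau_1\rangle$, $\widetilde{\mathcal{Y}}_8/\langle\tau_2\rangle$, and $\mathcal{Y}_8$ respectively. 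Corollary~\ref{easy} then gives
$${\rm Jac}(\widetilde{\mathcal{Y}}_8)\times {\rm Jac}(\widetilde{\mathcal{Y}}_8/H)^2\stackrel{k}{\sim} {\rm Jac}(\widetilde{\mathcal{Y}}_8/\langle\tau_1\rangle)\times {\rm Jac}(\widetilde{\mathcal{Y}}_8/\langle\tau_2\rangle)\times {\rm Jac}(\mathcal{Y}_8).$$
Writing each Jacobian on the right as a product of the Jacobian of its base $\widetilde{\mathcal{Y}}_8/H$ times the corresponding Prym, and writing ${\rm Jac}(\widetilde{\mathcal{Y}}_8)\sim {\rm Jac}(\mathcal{Y}_8)\times {\rm Prym}(\widetilde{\mathcal{Y}}_8/\mathcal{Y}_8)$, I would cancel the repeated factors of ${\rm Jac}(\widetilde{\mathcal{Y}}_8/H)^2$ and ${\rm Jac}(\mathcal{Y}_8)$ (using Poincar\'e reducibility) to reach
$${\rm Prym}(\widetilde{\mathcal{Y}}_8/\mathcal{Y}_8)\stackrel{k}{\sim} {\rm Prym}((\widetilde{\mathcal{Y}}_8/\langle\tau_1\rangle)/(\widetilde{\mathcal{Y}}_8/H))\times {\rm Prym}((\widetilde{\mathcal{Y}}_8/\langle\tau_2\rangle)/(\widetilde{\mathcal{Y}}_8/H)).$$
Third, I would repeat this argument verbatim with $\widetilde{\mathcal{Z}}_8$ and $H'=\langle\kappa_1,\kappa_2\rangle$: again $\kappa_1\kappa_2=\gamma$ on $\widetilde{\mathcal{Z}}_8$ and $\widetilde{\mathcal{Z}}_8/H'=E_8$, yielding the analogous splitting of ${\rm Prym}(\widetilde{\mathcal{Z}}_8/\mathcal{Z}_8)$ into the two $\kappa_i$-Prym factors over $E_8$.

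Finally, for the endomorphism statement, I would observe that the Rosati-style endomorphism $\alpha^*+(\alpha^{-1})^*$ produced in the proof of Theorem~\ref{main} commutes with both $\beta^*$ and $(\beta\alpha)^*$ (since $\beta\alpha\beta^{-1}=\alpha^{-1}$), and also with $\gamma^*$, hence with the whole groups $H$ and $H'$. Consequently it descends through the quotients and restricts to each of the four new Prym sub-factors, propagating the $\Q(\sqrt 2)$-action from Theorem~\ref{main}. The main obstacle is essentially bookkeeping: one must verify that the genera computed earlier ($g(\widetilde{\mathcal{Y}}_8/H)=2$ and $g(E_8)=1$ together with the dimensions of the intermediate quotients) are consistent with the claimed isogeny, so that the Poincar\'e cancellations are legitimate and the four Prym pieces are genuinely the asserted abelian varieties of positive dimension; since all relevant genera are already recorded in the text preceding the proposition, this verification is routine.
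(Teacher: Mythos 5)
Your proposal is correct and follows essentially the same route as the paper: Theorem \ref{main}(2) gives the coarse splitting, and Corollary \ref{easy} applied to the Klein four-groups $H=\langle\tau_1,\tau_2\rangle$ and $H'=\langle\kappa_1,\kappa_2\rangle$ (each containing the covering involution $\gamma$, so that the third quotient is $\yy_8$ resp. $\zz_8$) refines the two Prym factors exactly as in the computation preceding the proposition. Your extra paragraph justifying that $\alpha^\ast+(\alpha^{-1})^\ast$ preserves each of the four sub-factors is a welcome addition, since the paper asserts the $\Q(\sqrt2)$-multiplication on these pieces without further comment.
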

\begin{proof}The claim follows from Theorem \ref{main} and the computation done above.  
\end{proof}
It would be interesting to find all Jacobian surfaces over $k$ with real multiplication by $\sqrt{2}$ which are  factors  of the above 
Prym varieties.   

\subsection{$C^0_4$}
Finally we treat the case of $n=4$ which is the most simplest case. So we give explicit equations without details. 
Recall the equation defining:
\begin{equation} 
C^0_4:   \quad x^4 + z^4  + 2 y^4 - 4 x y^2 z+ a_2( x^2 z^2 - 2 x y^2 z + y^4)= 0
\end{equation}
By Theorem \ref{main} we see that 
$$J_4\stackrel{k}{\sim} E_1\times E_2\stackrel{k}{\simeq} E^2_1$$
where 
$$E_1:y^2=(a_2+2)x^4+4x^2+1,\ E_2:y^2=(a_2+2)x^4-4x^2+1$$ 
and the isomorphism $E_1\simeq E_2$ is given by $x\mapsto \sqrt{-1}x$. Note that $\sqrt{-1}=\zeta_4\in k$.  

%\nocite{*}
%\bibliographystyle{amsplain} 

%\bibliography{curves}{}

\end{document}